\providecommand{\U}[1]{\protect \rule{.1in}{.1in}}
\theoremstyle{change}
\newtheorem{definition}{Definition:}[section]
\newtheorem{proposition}[definition]{Proposition:}
\newtheorem{theorem}[definition]{Theorem:}
\newtheorem{lemma}[definition]{Lemma:}
\newtheorem{corollary}[definition]{Corollary:}
{\theorembodyfont{\rmfamily}
	\newtheorem{remark}[definition]{Remark:}
}
{\theorembodyfont{\rmfamily}
	
}
\newenvironment{proof}
{{\bf Proof:}}
{\qquad \hspace*{\fill} $\Box$}
\newcommand{\fg}{\mathfrak{g}}
\newcommand{\fn}{\mathfrak{n}}
\newcommand{\fz}{\mathfrak{z}}
\newcommand{\fh}{\mathfrak{h}}
\newcommand{\Ad}{\operatorname{Ad}}
\newcommand{\ad}{\operatorname{ad}}
\newcommand{\inner}{\operatorname{int}}
\newcommand{\fix}{\operatorname{fix}}
\newcommand{\rme}{\mathrm{e}}
\newcommand{\EC}{\mathcal{E}}
\newcommand{\LC}{\mathcal{L}}
\newcommand{\OC}{\mathcal{O}}
\newcommand{\UC}{\mathcal{U}}
\newcommand{\NC}{\mathcal{N}}
\newcommand{\HC}{\mathcal{H}}
\newcommand{\XC}{\mathcal{X}}
\newcommand{\DC}{\mathcal{D}}
\newcommand{\N}{\mathbb{N}}
\newcommand{\R}{\mathbb{R}}
\begin{document}

\title{Existence and uniqueness of control sets with a nonempty interior for linear control systems on solvable groups}
\author{Adriano Da Silva\thanks{Supported by Proyecto UTA Mayor Nº 4781-24} \\
	Departamento de Matem\'atica,\\Universidad de Tarapac\'a - Iquique, Chile.}
\date{\today}
\maketitle

\begin{abstract}
In this paper, we obtain weak conditions for the existence of a control set with a nonempty interior for a linear control system on a solvable Lie group. We show that the Lie algebra rank condition together with the compactness of the nilpotent part of the generalized kernel of the drift are enough to assure the existence of such a control set. Moreover, this control set is unique and contains the whole generalized kernel in its closure.
\end{abstract}

 {\small {\bf Keywords:} Solvable Lie groups, control sets, control-affine systems}
	
	{\small {\bf Mathematics Subject Classification (2020): 93B05, 93C05, 22E25} }%

\section{Introduction}

In the dynamical study of control systems, the concept of control sets plays an essential role. Several dynamical entities of the systems, such as equilibrium points, recurrent points, periodic and bounded orbits, etc., appear naturally inside them (see \cite[Chapter 4]{FCWK}). Also, the interior of a control set is where exact controllability takes place. On the other hand, by the Equivalence Theorem of P. Jouan \cite{JPh1}, linear control systems on Lie groups and homogeneous spaces appear as a classifying family for more general control-affine systems on connected manifolds, implying that their understanding is worth it in more general contexts.

Due to the previous, in this paper, we extend the results in \cite{DS1} by showing that the Lie algebra rank condition and the compactness of the nilpotent part of the generalized kernel of the drift are sufficient conditions to assure the existence of a control set with a nonempty interior of a linear control system on a solvable Lie group. 
The compactness assumption allows us to simplify our work to the study of the Cartesian product of a linear control system with nilpotent drift on a vectorial space by a control-affine system on a connected, simply connected, nilpotent Lie group. Moreover, once the system on the nilpotent component admits one such control, it guarantees the existence of a control set with a nonempty interior, indicating that the primary focus must be on this system. In order to understand the system on the nilpotent part, a detailed analysis of a class of diffeomorphisms, induced by group automorphisms, has to be done. Such an analysis allows us to construct points living in the interior of their own positive orbit, which implies the existence of control sets containing them.

The paper is structured as follows: In Section 2 we introduce linear vector fields and study a decomposition induced by their generalized kernel and the nilradical of the group/algebra. In the sequence, we study a type of diffeomorphism induced by an automorphism on a nilpotent Lie group. These diffeomorphisms are intrinsically connected with the solutions of linear systems, as we see in the subsequent sections. To conclude, we introduce control-affine systems on arbitrary manifolds and their control sets. We present here some general properties about positive orbits, control sets, and conjugations. In Section 3 we start the study of the control sets with a nonempty interior of linear control systems. We start by showing that for such a study, one can disregard any compact subgroup that is invariant by the drift, simplifying our result to connected and simply connected spaces. In the sequence we show that, if the nilpotent part of the generalized kernel (at the group level) is a compact subgroup, then the linear control system admits at most one control set with a nonempty interior. In Section 4, we tackle the existence problem. We start with the study of a particular class of control-affine systems on a connected, simply connected nilpotent Lie group. Under the Lie algebra rank condition, we prove that this class admits a control set $D$ with a nonempty interior that contains the origin of the group in its closure. In sequence, we consider the Cartesian of this system with a linear control system on a vectorial space $V$, whose associated drift is nilpotent. For this Cartesian, we prove that $V\times D$ is a control set.To conclude, we prove that, up to the quotient by compact subgroups, a linear control system on a solvable group is equivalent to a system on the Cartesian product considered previously. In particular, the linear control system admits a control set with a nonempty interior if it satisfies the Lie algebra rank condition and the nilpotent part of the generalized kernel is compact.


\section{Preliminares}

In this section we introduce some basic concepts needed throughout the paper. We also use this section to prove some technical lemmas, which will be very useful ahead.

\subsection{Linear vector fields and their derivations}

Let $G$ be a connected Lie group with Lie algebra $\mathfrak{g}$ identified with the set of right-invariant vector fields on $G$. A vector field $\mathcal{X}$ on $G$ is said to be {\bf linear} if for any $Y\in\fg$ it holds that 
$$[\XC, Y]\in \fg\;\;\;\mbox{ and }\;\;\;\XC(e)=0.$$
Following \cite[Theorem 1]{JPh1}, a linear vector field $\XC$ is complete, and its associated flow $\{\varphi_t\}_{t\in \R}$ is a $1$-parameter subgroup of $\mathrm{Aut}(G)$ satisfying
\begin{equation}
\label{derivativeonorigin} (d\varphi_{t})_{e}=\mathrm{e}^{t\mathcal{D}}\hspace{.5cm}\mbox{ and }\hspace{.5cm}\varphi_t(\exp X)=\exp(\rme^{t\DC}X), \hspace{.5cm}\forall t\in\R, X\in\fg,
\end{equation}
where $\DC:\fg\rightarrow\fg$ is the derivation defined by $\DC(Y):=[\XC, Y]$, and $\rme^{t\DC}$ its matrix exponential.

Let $\DC$ be a derivation of $\fg$. We say that $\DC$ is {\bf nilpotent} if $\DC^n\equiv0$ for some $n\in\N$, and $\DC$ is {\bf elliptic} ({\bf resp. hyperbolic}) if it is semisimple and its eigenvalues are pure imaginary ({\bf resp. real}). The {\bf (additive) Jordan decomposition} of $\DC$ is given by the
$$\DC=\DC_{\HC}+\DC_{\EC}+\DC_{\NC}, \hspace{.5cm}\mbox{ where
}\hspace{.5cm}\DC, \DC_{\HC}, \DC_{\EC}\mbox{ and }\DC_{\NC}\mbox{ commute, }$$
and $\DC_{\HC}$ is hyperbolic, $\DC_{\EC}$ is elliptic, and $\DC_{\NC}$ is nilpotent. Moreover, $\DC_{\HC}, \DC_{\EC}$, and $\DC_{\NC}$ are also derivations.

By the relation between linear vector fields and derivations, we say that a linear vector field $\XC$ is {\bf hyperbolic, elliptic}, or {\bf nilpotent} if its associated derivation $\DC$ is one of the respective types. By the results in \cite{DSAyPH}, the Jordan decomposition of $\DC$ implies the commutative decomposition of $\XC=\XC_{\HC}+\XC_{\EC}+\XC_{\NC}$ (also called the Jordan decomposition) into linear vector fields $\XC_{\HC}, \XC_{\EC}$, and $\XC_{\NC}$ whose associated derivations $\DC_{\HC}, \DC_{\EC}$, and $\DC_{\NC}$ are, respectively, hyperbolic, elliptic, and nilpotent.

\begin{remark}
\label{elliptical}
By \cite[Theorem 2.4]{DSAyPH}, any flow $\{\varphi_S\}_{S\in\R}$ of an elliptical vector field $\XC$ on $G$ is a flow of isometries for some left-invariant metric $\varrho$ on $G$. In particular, for any $S>0$ and $g\in G$, the sequence $\{\varphi_{nS_0}(g)\}_{n\in\N}$ admits a subsequence converging to $g$.

In fact, since $\{\varphi_S\}_{S\in\R}$ is a flow of isometries, the sequence $\{\varphi_{nS_0}(g)\}_{n\in\N}$ is bounded, and hence, there exists a subsequence $ \{\varphi_{n_kS_0}(h)\}_{k\in\N}$ that converges and is, in particular, a Cauchy sequence. Therefore, for any $p\in\N$ there exists $k_p\in\N$ such that
$$k\geq k_p\hspace{.5cm}\implies\hspace{.5cm}\varrho\left(\varphi_{(n_k-n_{k_p})S_0}(h), h\right)=\varrho\left(\varphi_{n_kS_0}(h), \varphi_{n_{k_p}}
(h)\right)<\frac{1}{p}.$$
Therefore, we can construct a sequence $ \{\varphi_{n_{k_p}S_0}(h)\}_{p\in\N}$ such that 
$$n_{k_p}\rightarrow +\infty \hspace{.5cm}\mbox{ and }\hspace{.5cm}\varphi_{n_{k_p}}(h)\rightarrow h, \hspace{.5cm}\mbox{ as }\hspace{.5cm}p\rightarrow+\infty.$$
\end{remark}

Let us now consider the {\bf generalized kernel}
$$\fg_0:=\{X\in\fg; \DC^nX=0, \mbox{ for some }n\in\N\},$$
of the derivation $\DC$. By considering the derivation $\DC^{\HC, \EC}=\DC^{\HC}+\DC^{\EC}$, it is not hard to see that $\fg_0=\ker\DC^{\HC, \EC}$, and hence, $\fg^0$ is a subalgebra of $\fg$. Furthermore, if $G_0$ is the connected subgroup associated to $\fg_0$ (also called generalized kernel), it holds that
$$\fg_0=\ker\DC^{\HC, \EC}\hspace{.5cm}\implies\hspace{.5cm}G_0=\left(\fix(\varphi^{\HC, \EC})\right)_1,$$ showing that $G_0$ is a closed subgroup. Let $\fn$ be the nilradical of $\fg$. Since $\DC$ restricts to $\fn$, we can consider the generalized kernels $\fn_0$ and $N_0$ of the restriction. On the other hand, if $N$ is the nilradical in $G$, then $N\cap G_0$ is the {\bf nilpotent part of the generalized kernel}. The next result highlights the importance of these subgroups.

\begin{lemma}
\label{conj}
Let $\XC$ be a linear vector field on a solvable Lie group $G$ and $\fn$ be the nilradical of $\fg$. It holds:

\begin{itemize}
 \item[1.] The Lie algebra $\fg$ satisfies $\fg=\fn+\fg_0$;
 \item[2.] $N_0=N\cap G_0$ is a closed subgroup of $G_0$, and $G_0/N_0$ is abelian.
 \item[3.] The map
$$\psi:G_0\times_{\rho}\fn\rightarrow G,\hspace{1cm} (g, X)\mapsto \rme^X g,$$
is a surjective homomorphism that conjugates $\XC$ the linear vector fields $\XC|_{G_0}\times\DC|_{\fn}$, where the semi-direct product $G_0\times_{\rho}\fn$ is given by the map $\rho(g):=\Ad(g)|_{\fn}$. Moreover, $\psi$ is an isomorphism if and only if $N_0=\{e\}$.
\end{itemize}



\end{lemma}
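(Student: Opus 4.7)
The plan rests on two key ingredients: any derivation of a solvable Lie algebra maps into the nilradical, and the Jordan components $\DC_\HC,\DC_\EC,\DC_\NC$ are polynomials in $\DC$, so they preserve every $\DC$-invariant subspace and realize the Jordan decomposition of the restriction there. With these in hand, item~1 follows by decomposing $\fg=\fg_0\oplus\fg_+$ into the generalized eigenspaces of $\DC^{\HC,\EC}=\DC_\HC+\DC_\EC$. Since $\DC^{\HC,\EC}=\DC-\DC_\NC$ is a polynomial in $\DC$ without constant term, the first ingredient gives $\DC^{\HC,\EC}(\fg)\subseteq\fn$, and invertibility of $\DC^{\HC,\EC}|_{\fg_+}$ forces $\fg_+=\DC^{\HC,\EC}(\fg_+)\subseteq\fn$, whence $\fg=\fg_0+\fn$.

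For item~2, the $\DC$-invariance of $\fn$ combined with uniqueness of the Jordan decomposition gives $\fn_0=\ker(\DC|_\fn)^{\HC,\EC}=\fn\cap\fg_0$ at the Lie algebra level. At the group level, $N\cap G_0$ is the intersection of two closed subgroups, hence closed, with Lie algebra $\fn_0$. Combining $G_0=(\fix(\varphi^{\HC,\EC}))_1$ with the identity $\varphi_t^{\HC,\EC}(\rme^X)=\rme^{\rme^{t\DC^{\HC,\EC}}X}$ on $N$ identifies $N\cap G_0$ with $\exp(\fn_0)=N_0$. The abelian-quotient statement follows from solvability: $[\fg_0,\fg_0]\subseteq\fg_0\cap[\fg,\fg]\subseteq\fg_0\cap\fn=\fn_0$. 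The anticipated obstacle sits here, in promoting the Lie algebra equality $\fn\cap\fg_0=\fn_0$ to the set-theoretic equality $N\cap G_0=N_0$, since this requires controlling the connectedness of $N\cap G_0$ and the kernel of $\exp|_\fn$ when $N$ is not simply connected.

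For item~3, endow $G_0\times_\rho\fn$ with the multiplication $(g_1,X_1)(g_2,X_2)=(g_1g_2,X_1*\Ad(g_1)X_2)$, where $*$ denotes the Baker--Campbell--Hausdorff product on $\fn$ and $\rho(g)=\Ad(g)|_\fn$. The computation
\[
\psi(g_1,X_1)\psi(g_2,X_2)=\rme^{X_1}g_1\rme^{X_2}g_2=\rme^{X_1}\rme^{\Ad(g_1)X_2}g_1g_2=\rme^{X_1*\Ad(g_1)X_2}g_1g_2
\]
shows that $\psi$ is a homomorphism, and item~1 yields surjectivity. The conjugation identity follows from \eqref{derivativeonorigin} and the automorphism property of $\varphi_t$:
\[
\psi(\varphi_t(g),\rme^{t\DC}X)=\rme^{\rme^{t\DC}X}\varphi_t(g)=\varphi_t(\rme^X)\varphi_t(g)=\varphi_t(\psi(g,X)).
\]
Finally, $d\psi_{(e,0)}(Y,X)=Y+X$ has kernel $\{(-X,X):X\in\fn_0\}$, and $\ker\psi=\{(\rme^{-X},X):X\in\fn_0\}$; both are trivial precisely when $N_0=\{e\}$, yielding the isomorphism criterion.
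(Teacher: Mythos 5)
Your items 1 and 3 follow essentially the same route as the paper. For item 1, your generalized-eigenspace decomposition $\fg=\fg_0\oplus\fg_+$ together with $\DC^{\HC,\EC}(\fg)\subseteq\fn$ and invertibility of $\DC^{\HC,\EC}$ on $\fg_+$ is the same argument the paper runs with explicit real Jordan blocks $W_{\lambda,\mu}$. For item 3, your homomorphism, conjugation, and kernel computations match the paper's (your $\ker\psi=\{(\rme^{-X},X):X\in\fn_0\}$ is in fact the correct form; the paper's $(\rme^X,X)$ is a sign slip). One small elision: "item 1 yields surjectivity" needs a bridge from the Lie algebra identity $\fg=\fn+\fg_0$ to the group identity $G=NG_0$ — either the standard argument that $NG_0$ is a subgroup (by normality of $N$) containing a neighborhood of $e$ in the connected group $G$, or Wüstner's lemma $\exp(\fg)\subset N\exp(\fg_0)$ as the paper uses — followed by surjectivity of $\exp$ on the nilpotent group $N$.

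The genuine gap is the one you flag yourself in item 2: knowing that $N\cap G_0$ is closed with Lie algebra $\fn_0$ only identifies its \emph{identity component} with $N_0$; it does not rule out extra components, so the set-theoretic equality $N\cap G_0=N_0$ is not yet proved. The paper closes this exactly where you anticipated, via the covering-map property of $\exp:\fn\to N$. Concretely: $G_0\cap N\subset\fix(\varphi|_N^{\HC,\EC})$, and if $g\in\fix(\varphi|_N^{\HC,\EC})$ is written as $g=\rme^X$ with $X\in\fn$ (possible since $\exp$ is surjective on a connected nilpotent group), then $\exp(\rme^{t\DC^{\HC,\EC}}X)=\varphi_t^{\HC,\EC}(g)=g$ for all $t$, so the continuous curve $t\mapsto\rme^{t\DC^{\HC,\EC}}X$ lies in the discrete fibre $\exp^{-1}(g)$ and is therefore constant; hence $X\in\ker\DC^{\HC,\EC}|_{\fn}=\fn_0$ and $g\in\exp(\fn_0)$. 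Thus $\fix(\varphi|_N^{\HC,\EC})=\exp(\fn_0)=N_0$ is connected, and the chain $N_0\subset N\cap G_0\subset\fix(\varphi|_N^{\HC,\EC})=N_0$ gives the equality. You need this argument explicitly; without it the lemma (and the later reduction steps that quotient by $N_0$) does not go through when $N$ is not simply connected. The abelian-quotient part of item 2 is fine as you wrote it.
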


\begin{proof} 1. Since the complexification of $\DC^{\HC, \EC}$ is diagonalizable\footnote{In fact, $\DC^{\HC, \EC}$ is the semisimple part of $\DC$ in the Jordan–Chevalley decomposition.}, $\fg$ can be decomposed into invariant subspaces $W_{\lambda, \mu}$ such that
\begin{itemize}
 \item[(i)] $W_{\lambda, 0}=\langle X\rangle$ and $\DC^{\HC, \EC}X=\lambda X$ or
 \item[(ii)] $W_{\lambda, \mu}=\langle X, Y\rangle$ and $\DC^{\HC, \EC}X=\lambda X+\mu Y$ and $\DC^{\HC, \EC}Y=-\mu X+\lambda Y$.
\end{itemize}
Therefore,
$$\lambda^2+\mu^2\neq 0\hspace{.5cm}\implies\hspace{.5cm}W_{\lambda, \mu}\subset\mathrm{Im}(\DC^{\HC, \EC})\subset\fn,$$
where the last equality comes from the fact that the image of any derivation of a solvable algebra is contained in the nilradical \cite[Proposition 2.18]{SM1}. Therefore, 
$$\fg=\bigoplus_{\lambda^2+\mu^2\neq 0}W_{\lambda, \mu} \oplus \ker(\DC^{\HC, \EC})\subset \fn+\ker(\DC^{\HC, \EC})=\fn+\fg_0\subset\fg,$$
as stated.\bigskip

2. Since
$$G_0=\fix(\varphi^{\HC, \EC})_1\hspace{.5cm}\mbox{ and }\hspace{.5cm}N_0=\fix(\varphi|_N^{\HC, \EC})_1,$$
it holds that $N_0\subset G_0\cap N$. On the other hand, the inclusion 
$$G_0\cap N\subset\fix(\varphi|_N^{\HC, \EC}),$$
certainly holds. Hence, the fact that the exponential map of a nilpotent group is a covering map, together with relation (\ref{derivativeonorigin}), implies that $\fix(\varphi|_N^{\HC, \EC})$ is a connected subgroup, implying that the desired equality holds.

Since on solvable algebras $[\fg, \fg]\subset\fn$, it holds that
$$[\fg_0, \fg_0]\subset\fg_0\cap\fn=\fn_0,$$
showing that $\fn_0$ is an ideal of $\fg_0$ containing $[\fg_0, \fg_0]$. Consequently, $N_0$ is a normal subgroup of $G_0$, and $G_0/N_0$ is abelian.
\bigskip

3. Since $N$ is a normal subgroup of $G$, Lemma 3.1 of \cite{Wuns} implies that
$$\exp(\fg)\subset N\exp(\fg_0).$$
By the connectedness of $G$, the previous implies that any element of $x\in G$ can be written as $x=hg$ with $h\in N$ and $g\in G_0$. Since the exponential map of a nilpotent group is surjective, there exists $X\in\fn$ such that $h=\rme^X$, and hence,
$$x=hg=\rme^{X}g=\psi(g, X),$$
showing that $\psi$ is a surjective map. On the other hand, for any $(g_1, X_1), (g_2, X_2)\in G_0\times_{\rho}\fn$, it holds that
$$\psi((g_1, X_1)(g_2, X_2))=\psi(g_1g_2, X_1*\rho(g_1)X_2)=\rme^{X_1*\rho(g_1)X_2}g_1g_2$$
$$\rme^{X_1}\rme^{\Ad(g_1)X_2}g_1g_2=\rme^{X_1}g_1\rme^{X_2}g_2=\psi(g_1, X_1)\psi(g_2, X_2),$$
 showing that $\psi$ is a homomorphism. For the conjugation property, let us note that
    $$\psi(\varphi_
 S|_{G_0}(g), \rme^{S\DC|_{\fn}}X)=\exp\left(\rme^{S\DC|_{\fn}}X\right)\varphi_S|_{G_0}(g)=\exp\left(\rme^{S\DC}X\right)\varphi_S(g)=\varphi_S(\rme^X)\varphi_S(g)=\varphi_S(\rme^Xg)=\varphi_S(\psi(g, X)),$$
 which by derivation implies the conjugation between the associated vector fields. To conclude, $\psi$ is an isomorphism if and only if $\ker\psi=\{(e, 0)\}$. By a straightforward calculation, we obtain that
 $$\ker\psi=\{(g, X)\in N_0\times\fn_0; \;g^{-1}=\rme^X\}.$$
Therefore, $N_0=\{e\}$ implies $\ker\psi=\{(e, 0)\}$. Reciprocally, since $(\rme^{X}, X)\in \ker\psi$ for any $X\in\fn_0$, we conclude that $\ker\psi=\{(e, 0)\}$ implies $N_0=\{e\}$, concluding the proof.
\end{proof}
    



\begin{remark}

The previous results show us that any linear vector field $\XC$ on a connected solvable Lie group with trivial $N_0$ is equivalent to the Cartesian of a nilpotent linear vector field on an abelian group by an invertible derivation on a nilpotent Lie algebra. We will use this fact in order to simplify linear control systems ahead.
\end{remark}





\subsection{Maps induced by automorphisms}

In this section we study a class of diffeomorphisms induced by automorphisms of the groups. Such a class is intrinsically connected with the solutions of affine control systems on nilpotent Lie groups and will be of help in the proof of the main results.

Let $\fn$ be a nilpotent Lie algebra, and consider $(\fn, *)$ to be its connected, simply connected, nilpotent Lie group. Let $\varphi\in\mathrm{Aut}(\fn)$ and define the map
$$f_{\varphi}:\fn\rightarrow \fn, \hspace{.5cm}x\in \fn\mapsto f_{\varphi}(x):=x*\varphi(x)^{-1}.$$
The next proposition tells us about the continuity of the inverse of the map $\varphi\mapsto f_{\varphi}$.

\begin{lemma}
\label{lemma}
    Let $(\fn, *)$ be the simply connected, connected nilpotent Lie group with Lie algebra $\fn$. If $\varphi\in\mathrm{Aut}(\fn)$, then 
$$\det (\varphi-1)\neq 0\hspace{.5cm}\implies\hspace{.5cm}f_{\varphi}(x):=x*\varphi(x^{-1})\hspace{.5cm}\mbox{ is a diffeomorphism}.$$
\end{lemma}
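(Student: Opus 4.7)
The plan is to induct on the nilpotency class of $\fn$, using the center $\fz=Z(\fn)$ as the step of the filtration. The key structural fact I will use repeatedly is that, in the exponential (equivalently, BCH) coordinates on the simply connected nilpotent Lie group $(\fn,*)$, the inversion is the linear map $x\mapsto -x$, and any $t\in\fz$ commutes with every $y\in\fn$ under $*$, with $t*y=t+y$. In particular $*$ restricts to ordinary vector addition on $\fz$, and the automorphism $\varphi$ preserves $\fz$.

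The base case is immediate: if $\fn$ is abelian, then $f_\varphi(x)=x-\varphi(x)=(1-\varphi)(x)$ is a linear automorphism of $\fn$ by hypothesis, hence a diffeomorphism. For the inductive step, assume the result for strictly smaller class and let $\fn$ have class $k\geq 2$. The short exact sequence $0\to\fz\to\fn\to\fn/\fz\to 0$ is $\varphi$-equivariant, and with $\bar\varphi\in\mathrm{Aut}(\fn/\fz)$ denoting the induced map one has $\det(\varphi-1)=\det(\varphi|_\fz-1)\cdot\det(\bar\varphi-1)$, so both factors are nonzero. Since $\fn/\fz$ has strictly smaller nilpotency class, induction applied to $\bar\varphi$ gives that $f_{\bar\varphi}$ is a diffeomorphism of $\fn/\fz$, and by construction $\pi\circ f_\varphi=f_{\bar\varphi}\circ\pi$, where $\pi:\fn\to\fn/\fz$ is the projection.

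The crucial calculation is the identity
$$f_\varphi(x+t)=f_\varphi(x)+(1-\varphi)(t),\hspace{.5cm}\forall\,x\in\fn,\;t\in\fz,$$
which follows from
$$(x+t)*(-\varphi(x+t))=x*t*(-\varphi(x))*(-\varphi(t))=x*(-\varphi(x))*(t-\varphi(t))$$
by moving $t$ and $\varphi(t)$ freely past the remaining factors and collapsing $*$ to addition on $\fz$. This single identity yields three things at once. Surjectivity: given $y\in\fn$, lift $\pi(y)$ to some $x_0\in\fn$ with $\pi(f_\varphi(x_0))=\pi(y)$, so that $f_\varphi(x_0)=y+z_0$ for some $z_0\in\fz$, and adjust by $t=-(1-\varphi|_\fz)^{-1}(z_0)$. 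Injectivity: any two preimages of a common image project to the same class in $\fn/\fz$ by injectivity of $f_{\bar\varphi}$, hence differ by some $t\in\fz$, and the identity then forces $(1-\varphi|_\fz)(t)=0$, i.e.\ $t=0$. Differential: differentiating the identity at $t=0$ gives $df_\varphi|_x|_\fz=(1-\varphi|_\fz)$, so $df_\varphi|_x$ restricts to an isomorphism on $\fz$ and induces the isomorphism $df_{\bar\varphi}|_{\pi(x)}$ on $\fn/\fz$; it is therefore an isomorphism on $\fn$. A smooth bijection with everywhere invertible differential is a diffeomorphism, closing the induction.

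The main obstacle is establishing the centrality identity cleanly: one must keep careful track of how the linear inversion in exponential coordinates interacts with the $*$-product, and of the fact that central elements and their $\varphi$-images can be moved freely past all other factors. Once that identity is in hand, both the bijectivity and the invertibility of the differential of $f_\varphi$ split along the central extension $0\to\fz\to\fn\to\fn/\fz\to 0$, and the conclusion follows from the inverse function theorem.
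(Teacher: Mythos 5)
Your proof is correct. The paper actually states Lemma \ref{lemma} without giving any proof, so your argument fills that gap; moreover, it follows exactly the inductive scheme the paper itself uses for the companion Proposition \ref{continverse} (induction through the central extension $0\to\fz(\fn)\to\fn\to\fn/\fz(\fn)\to 0$, the abelian base case where $f_\varphi=1-\varphi$, the factorization $\det(\varphi-1)=\det(\varphi|_{\fz}-1)\det(\bar\varphi-1)$, and the key identity $f_\varphi(x*t)=f_\varphi(x)*(1-\varphi)(t)$ for central $t$, which is the paper's identity $f_{\varphi}(x*z)=f_{\varphi}(x)*f_{\varphi}(z)$). All the individual steps --- surjectivity via adjusting by $t=-(1-\varphi|_{\fz})^{-1}(z_0)$, injectivity, and the invertibility of the differential combined with the inverse function theorem --- check out.
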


\begin{proposition}
\label{continverse}
    Let us consider $\varphi_n, \varphi\in\mathrm{Aut}(\fn)$ with  $\varphi_n\rightarrow\varphi$ and $x_n, x\in \fn$. If $\det(1-\varphi)\neq 0$, then 
    $$f_{\varphi_n}(x_n)\rightarrow f_{\varphi}(x)\hspace{.5cm}\implies\hspace{.5cm}x_n\rightarrow x.$$
\end{proposition}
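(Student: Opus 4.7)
The plan is to establish the convergence $x_n\to x$ by induction along the lower central series of $\fn$. Set $\fn^{(0)}=\fn$ and $\fn^{(j)}=[\fn,\fn^{(j-1)}]$ for $j\geq 1$, and let $s$ denote the nilpotency class, so that $\fn^{(s)}=\{0\}$. Every automorphism of $\fn$ preserves each $\fn^{(j)}$, so $\varphi$ and the $\varphi_n$ descend to automorphisms $\bar{\varphi}$ and $\bar{\varphi}_n$ of each quotient $\fn/\fn^{(j)}$, with $\bar{\varphi}_n\to\bar{\varphi}$. I will prove by induction on $j$ that $\pi_j(x_n)\to\pi_j(x)$ in $\fn/\fn^{(j)}$, where $\pi_j$ is the canonical projection; taking $j=s$ gives the proposition.

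The base case $j=1$ is pure linear algebra. The quotient $\fn/[\fn,\fn]$ is abelian, so its group law reduces to vector addition and the formula $f_{\varphi_n}(x_n)=x_n * (-\varphi_n(x_n))$ projects to $\pi_1(f_{\varphi_n}(x_n))=(1-\bar{\varphi}_n)(\pi_1(x_n))$. Since $1-\varphi$ is invertible on $\fn$ and preserves $[\fn,\fn]$, the induced operator $1-\bar{\varphi}$ is invertible on $\fn/[\fn,\fn]$; by convergence $\bar{\varphi}_n\to\bar{\varphi}$, the operators $1-\bar{\varphi}_n$ are also invertible for large $n$ with a uniform bound on their inverses. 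Applying these inverses to $(1-\bar{\varphi}_n)(\pi_1(x_n))\to(1-\bar{\varphi})(\pi_1(x))$ yields $\pi_1(x_n)\to\pi_1(x)$.

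For the inductive step, assume $\pi_j(x_n)\to\pi_j(x)$ and pass to $\bar{\fn}:=\fn/\fn^{(j+1)}$. The image $\bar{\fn}^{(j)}$ of $\fn^{(j)}$ is central in $\bar{\fn}$, since $[\fn,\fn^{(j)}]=\fn^{(j+1)}$ vanishes modulo $\fn^{(j+1)}$. Fix a vector-space complement $\fn=W\oplus\fn^{(j)}$ and decompose $\bar{x}_n=\bar{w}_n+\bar{z}_n$ with $\bar{w}_n$ in the image of $W$ and $\bar{z}_n\in\bar{\fn}^{(j)}$. A direct BCH calculation --- in which every iterated bracket containing $\bar{z}_n$ or $\bar{\varphi}_n(\bar{z}_n)$ vanishes by centrality --- yields the identity
\[f_{\bar{\varphi}_n}(\bar{x}_n)=f_{\bar{\varphi}_n}(\bar{w}_n)+(1-\bar{\varphi}_n)(\bar{z}_n),\]
where the outer sum is vector addition in $\bar{\fn}$. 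Since $\bar{\varphi}_n\to\bar{\varphi}$ and $\bar{w}_n\to\bar{w}$ by the inductive hypothesis, continuity gives $f_{\bar{\varphi}_n}(\bar{w}_n)\to f_{\bar{\varphi}}(\bar{w})$; subtracting from the hypothesis reduces the problem to the linear convergence $(1-\bar{\varphi}_n|_{\bar{\fn}^{(j)}})(\bar{z}_n)\to(1-\bar{\varphi}|_{\bar{\fn}^{(j)}})(\bar{z})$ on the invariant subspace $\bar{\fn}^{(j)}$, and invertibility of the restriction of $1-\varphi$ closes the induction exactly as in the base case. The main obstacle is the displayed identity: once it is in place each inductive step reduces to the linear argument already used, but its verification demands careful bookkeeping of the BCH series together with invariance of the filtration under the automorphisms $\varphi$ and $\varphi_n$.
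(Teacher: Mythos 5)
Your argument is correct and is essentially the paper's proof in a different dress: both reduce to the linear case $f_{\varphi}=1-\varphi$ by peeling off a central piece on which $f_{\varphi}$ acts additively, the paper inducting on $\dim\fn$ via the center $\fz(\fn)$ while you induct along the lower central series. The displayed identity you single out as the main obstacle is exactly the paper's key observation $f_{\varphi}(x*z)=f_{\varphi}(x)*f_{\varphi}(z)$ for central $z$, and it holds for precisely the reason you give.
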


\begin{proof} Let us first assume that $(\fn, *)$ is an abelian, simply connected group. In this case, the map $f_{\varphi}$ coincides with the linear map $1-\varphi$. 
Let $T:\fn\rightarrow \fn$ be a linear map and define
$$\|T\|_{\mathrm{min}}:=\min\{|Tx|, \;|x|=1\}.$$
For all $x\in \fn$ it holds that $|Tx|\geq \|T\|_{\mathrm{min}}|x|$. Moreover, if $\det T\neq 0$ then $\|T\|_{\mathrm{min}}>0$ and 
$$T_n\rightarrow T\hspace{.5cm}\implies\hspace{.5cm}\|T_n\|_{\mathrm{min}}\rightarrow \|T\|_{\mathrm{min}}.$$ 
Therefore, for $n\in\N$ large enough,
$$|f_{\varphi_n}(x_n)|=|(1-\varphi_n)x_n|\geq \|(1-\varphi_n)\|_{\mathrm{min}}|x_n|\geq \frac{1}{2}\|(1-\varphi)\|_{\mathrm{min}}|x_n|,$$
Since $(f_{\varphi_n}(x_n))_{n\in\N}$ is convergent, we conclude that $(x_n)_{n\in\N}$ is bounded. Now, if $x_{n_k}\rightarrow y$, then
$$f_{\varphi_{n_k}}(x_{n_k})=x_{n_k}-\varphi_{n_k} x_{n_k}\rightarrow y-\varphi y\hspace{.5cm}\implies\hspace{.5cm}(1-\varphi)y=(1-\varphi)x\hspace{.5cm}\implies\hspace{.5cm} x=y.$$
Therefore, $(x_n)_{n\in\N}$ is bounded and has a unique accumulation point, which implies that $x_n\rightarrow x$ as desired.

Let us now proceed to prove the result by induction in the dimension of $\fn$. If $\dim \fn=1$, $\fn$ is abelian, and the result is true by the previous case. Let $(\fn, *)$ be the connected, simply connected nilpotent Lie group with Lie algebra $\fn$ and $\dim \fn=n$ and assume that the result holds true for any connected, simply connected nilpotent Lie group with smaller dimension.

By the hypothesis on $\fn$, the connected, simply connected nilpotent groups associated with the Lie algebras $\fz(\fn)$ and $\widehat{\fn}:=\fn/\fz(\fn)$ have dimension smaller than $n$. Since $\fz(\fn)$ is invariant by automorphisms, there exist
$\widehat{\varphi}_n, \widehat{\varphi}\in\mathrm{Aut}(\widehat{\fn})$ satisfying
$$\pi\circ \varphi=\widehat{\varphi}\circ\pi\hspace{.5cm}\mbox{ and }\hspace{.5cm}\pi\circ \varphi_n=\widehat{\varphi}_n\circ\pi,$$
where $\pi:\fn\rightarrow \widehat{\fn}$ is the canonical projection. Note that if $X\in \fn$ is such that
$$\pi((1-\varphi)X)=(1-\widehat{\varphi})\pi(X)=0\hspace{.5cm}\implies\hspace{.5cm}(1-\varphi)X=Y, \hspace{.5cm}\mbox{ for some }\hspace{.5cm}Y\in \fz(\fn).$$
However,
$$\det(1-\varphi)\neq 0\hspace{.5cm}\implies\hspace{.5cm} \det(1-\varphi|_{\fz(\fn)})\neq 0,$$
and hence, $Y=(1-\varphi)Z$ for some $Z\in \fz(\fn)$. Therefore,
$$(1-\varphi)X=(1-\varphi)Z\hspace{.5cm}\implies\hspace{.5cm}X=Z\in \fz(\fn)\hspace{.5cm}\implies\hspace{.5cm}\pi(X)=0,$$
showing that $\det(1-\widehat{\varphi})\neq 0$. Since $\pi\circ f_{\varphi}=f_{\widehat{\varphi}}\circ\pi$,
we conclude by the inductive hypothesis that 
$$f_{\widehat{\varphi}}(\pi(x)),$$
which by the inductive hypothesis implies that $\pi(x_n)\rightarrow\pi(x)$. Consequently, there exists $z_n\in \fz(\fn)$ such that $x_n*z_n\rightarrow x$. Since
$$f_{\varphi}(x*z)=f_{\varphi}(x)*f_{\varphi}(z), \hspace{.5cm}\forall x\in \fn, z\in \fz(\fn),$$
we conclude that
$$f_{\varphi_n|_{\fz(\fn)}}(z_n)=f_{\varphi_n}(z_n)=(f_{\varphi_n}(x_n))^{-1}*f_{\varphi_n}(x_n*z_n)\rightarrow (f_{\varphi}(x))^{-1}*f_{\varphi}(x)=0=f_{\varphi|_{\fz(\fn)}}(0),$$
which, by the abelian case, implies $z_n\rightarrow 0$ and hence $x_n\rightarrow x$, as stated.
\end{proof}

\begin{corollary}
    Let $Z$ be a topological space and let $\varphi:Z\rightarrow\mathrm{Aut}(\fn)$ and $f:Z\rightarrow \fn$ be continuous curves. If $\det (1-\varphi(z))\neq 0$ for any $z\in Z$, the curve
    $$z\in Z\mapsto f_{\varphi(z)}^{-1}(f(z))\in \fn,$$
    is continuous.
\end{corollary}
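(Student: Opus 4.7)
The plan is to reduce the statement directly to Proposition \ref{continverse}. Set $g(z):=f_{\varphi(z)}^{-1}(f(z))$, which is well-defined on all of $Z$ because the hypothesis $\det(1-\varphi(z))\neq 0$ together with Lemma \ref{lemma} guarantees that each $f_{\varphi(z)}$ is a diffeomorphism of $\fn$. I want to show $g$ is continuous at an arbitrary $z_0\in Z$.

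First I would fix a sequence $z_n\to z_0$ in $Z$ (if $Z$ is not first countable, the same argument goes through with nets, since Proposition \ref{continverse} is a purely topological convergence statement about $\mathrm{Aut}(\fn)$ and $\fn$). By continuity of $\varphi$ and $f$, I obtain $\varphi(z_n)\to \varphi(z_0)$ in $\mathrm{Aut}(\fn)$ and $f(z_n)\to f(z_0)$ in $\fn$. Writing $x_n:=g(z_n)$ and $x:=g(z_0)$, the definition of $g$ gives $f_{\varphi(z_n)}(x_n)=f(z_n)$ and $f_{\varphi(z_0)}(x)=f(z_0)$, so
$$f_{\varphi(z_n)}(x_n)=f(z_n)\longrightarrow f(z_0)=f_{\varphi(z_0)}(x).$$

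Now I would simply invoke Proposition \ref{continverse} with $\varphi_n:=\varphi(z_n)$ and $\varphi:=\varphi(z_0)$: the assumption $\det(1-\varphi(z_0))\neq 0$ is precisely what the proposition requires, and the above display is its other hypothesis. The conclusion $x_n\to x$ is exactly $g(z_n)\to g(z_0)$. Since the sequence was arbitrary, $g$ is continuous at $z_0$, and hence on $Z$.

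There is essentially no obstacle here beyond unpacking notation; the corollary is a pointwise repackaging of Proposition \ref{continverse}, with Lemma \ref{lemma} ensuring that $f_{\varphi(z)}^{-1}$ makes sense in the first place. The only mild subtlety is the jump from the sequential statement in Proposition \ref{continverse} to an arbitrary topological space $Z$, which is handled by noting that the proposition's proof only uses convergence in $\mathrm{Aut}(\fn)$ and $\fn$ — both metrizable — and therefore transfers verbatim to nets.
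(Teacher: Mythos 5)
Your proof is correct and is exactly the intended derivation: the corollary is an immediate pointwise application of Proposition \ref{continverse}, with Lemma \ref{lemma} guaranteeing that $f_{\varphi(z)}^{-1}$ is defined. Your remark about non--first-countable $Z$ is handled adequately by nets; equivalently, one can observe that Proposition \ref{continverse} says the map $(\psi,y)\mapsto f_{\psi}^{-1}(y)$ is continuous on the metrizable open set $\{\psi\in\mathrm{Aut}(\fn):\det(1-\psi)\neq 0\}\times\fn$, and then compose with the continuous map $z\mapsto(\varphi(z),f(z))$.
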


\subsection{Control-affine systems}

Let $M$ be a finite-dimensional smooth manifold and consider a compact and convex subset $\Omega \subset \mathbb{R}^{m}$ satisfying $0\in \inner\Omega$. A {\bf control-affine system} on $M$ is the (parametrized) family
of ordinary differential equations
\begin{flalign*}
\label{CAS}
&&\,
\dot{x}=f_{0}(x)+\sum_{j=1}^{m} u_{j}f_{j}(x),\hspace{.5cm} u=(u_1, \ldots, u_m)\in\UC
&&\hspace{-1cm}\left(\Sigma_{M}\right),
\end{flalign*}
where $f_{0},f_{1},\ldots ,f_{m}$ are smooth vector fields defined on $M$, and the $\UC$ is the set of the {\bf control functions} given by
$$\mathcal{U}:=\{u:\mathbb{R}\mapsto\mathbb{R}^m, \mbox{ locally integrable with }u(t)\in \Omega \mbox{ for all } t\in\mathbb{R}\}.$$
By our assumptions on the {\bf control range} $\Omega$, the set of $\UC$ is compact, locally convex, and metrizable in the weak*-topology of $L^{\infty}(\R, \R^m)=(L^1(\R, \R^m))^*$.


For a given $x\in M$ and $u\in \mathcal{U}$, the solution of $
\Sigma _{M}$ is the unique absolutely continuous curve $t\mapsto \phi (t,x, u)
$ on $M$ satisfying $\phi (0, x,  u)=x$. 
We denote by 
\begin{align*}
\mathcal{O}_{S}^{+}(x)& :=\left\{ \phi (S, x, u)\ :\ u\in \mathcal{%
U}\right\} , \\
\mathcal{O}_{\leq S }^{+}(x)& :=\bigcup_{s \in
[0, s]}\mathcal{O}_{s}^{+}(x)\mbox{\quad and\quad }\mathcal{O}%
^{+}(x):=\bigcup_{\tau \text{ }>\text{ }0}\mathcal{O}_{\tau }^{+}(x).
\end{align*}%
the set of points {\bf reachable from $x\in M$ at time $S > 0$}, the set of points  {\bf reachable from $x$ up to time $S $}, and the {\bf positive orbit of $x$}, respectively. Analogously we denote by $\mathcal{O}_{S}^{-}(x), \mathcal{O}_{\leq S}^{-}(x)$ and $\mathcal{O}^{-}(x)$ the set of points {\bf controllable to $x$ in time $\tau>0$}, the set of points {\bf controllable to $x$ up to time $S$}, and the {\bf negative orbit of $x$}, respectively.

The control-affine system $\Sigma_M$ is said to be {\bf locally accessible} if for all $x\in M$ and $S >0$, 
the sets $\mathcal{O}_{\leq S }^{+}(x)$ and $\mathcal{O}_{\leq
S}^{-}(x)$ have nonempty interiors. Analogously, $\Sigma_M$ is said to be  {\bf strongly accessible} if for each $x\in M$, there is some $S>0$ such that $\inner \mathcal{O}^+_S(x)\neq 0$.

The previous properties can be obtained by algebraic means by considering the Lie algebra $\mathcal{L}$ generated by the vectors $f_u:=f_{0}+\sum_{j=1}^{m} u_{j}f_{j}, u\in\Omega$ and $\LC_0$ the ideal in $\LC$ generated by the differences $f_u-f_v, u, v\in\Omega$.  Then, if the system $\Sigma_M$ satisfies the {\bf Lie algebra rank condition (LARC)}, that is, if $\mathcal{L}(x)=T_{x}M$ for all $x\in M$, then it is locally accessible. For real analytic systems, the two concepts are in fact equivalents \cite[Theorem 12]{Sontag}. On the other hand, the system is strongly accessible if and only if $\mathcal{L}_0(x)=T_{x}M$ (see \cite[Proposition 5.6]{Ka2}). 

The next technical lemma will assure the existence of a control function in $\UC$ with a nice property. Such property will come in handy ahead, in order to assure the existence of a control set with a nonempty interior ahead.

\begin{lemma}\label{universalcontrol}
If the control-affine system $\Sigma_M$ is real analytic and satisfies the LARC, then it admits a control function $u_0\in\UC$ satisfying
\begin{equation}
 \label{inner}
\phi(S, x, u_0)\in\inner\OC^+(x), \hspace{.5cm}\forall x\in M, S>0.
\end{equation}
\end{lemma}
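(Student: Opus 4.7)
The plan is a Baire category argument in $\UC$ equipped with the weak*-topology of $L^\infty(\R,\R^m)=(L^1(\R,\R^m))^*$, under which $\UC$ is a compact metrizable (hence Polish) space; as is standard for control-affine systems with compact convex control range, for every fixed $(x,S)\in M\times(0,\infty)$ the evaluation map $u\mapsto\phi(S,x,u)$ is continuous from $\UC$ to $M$. Analyticity together with the LARC yields local accessibility, so that $\inner\OC^+_{\leq\tau}(y)\neq\emptyset$ for every $y\in M$ and every $\tau>0$.

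For each pair $(x,S)$ I would introduce $A(x,S):=\{u\in\UC\,:\,\phi(S,x,u)\in\inner\OC^+(x)\}$. The set $A(x,S)$ is open in $\UC$ because $\phi(S,x,\cdot)$ is weak*-continuous and $\inner\OC^+(x)$ is open in $M$. Density of $A(x,S)$ is the first main technical point: given $u\in\UC$ and a weak*-neighborhood $\mathcal{V}$ of $u$, one picks $\delta>0$ small and modifies $u$ only on $[S-\delta,S]$ by concatenating with a control that drives $\phi(S-\delta,x,u)$ into $\inner\OC^+_{\leq\delta}\bigl(\phi(S-\delta,x,u)\bigr)$; this latter set is nonempty by local accessibility, the concatenation $u'$ lies in $\mathcal{V}$ when $\delta$ is small enough (because $u'-u$ is bounded and supported on an interval of vanishing length, so its pairing with any $L^1$-function tends to zero), and its endpoint $\phi(S,x,u')$ lies in $\inner\OC^+(x)$ as desired. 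Since $M$ is second-countable, fix a countable dense sequence $\{(x_n,S_n)\}_{n\in\N}\subset M\times(0,\infty)$; by the Baire category theorem applied in the compact metric space $\UC$, the intersection $\bigcap_n A(x_n,S_n)$ is a dense $G_\delta$, hence nonempty, and I pick any $u_0$ in it.

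To promote the property from the dense sequence $\{(x_n,S_n)\}$ to all $(x,S)\in M\times(0,\infty)$, I would prove that $W:=\{(y,z)\in M\times M\,:\,z\in\inner\OC^+(y)\}$ is open in $M\times M$. Given $(y_0,z_0)\in W$, one produces a control $v^*$ and a time $t^*$ with $\phi(t^*,y_0,v^*)=z_0$ such that the endpoint map $(y,v)\mapsto\phi(t^*,y,v^*+v)$ has surjective differential onto $T_{z_0}M$ at $(y_0,0)$; the existence of such a regular pair again follows from LARC plus analyticity, by the same Sussmann--Krener-type mechanism that underlies the density argument. An implicit-function/open-mapping argument then produces a joint neighborhood of $(y_0,z_0)$ contained in $W$. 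Combined with the continuity of $(x,S)\mapsto(x,\phi(S,x,u_0))$, openness of $W$ forces $\{(x,S):\phi(S,x,u_0)\in\inner\OC^+(x)\}$ to be open in $M\times(0,\infty)$; since it contains the dense sequence $\{(x_n,S_n)\}$, it is all of $M\times(0,\infty)$, which is the claim.

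The main obstacles are concentrated in the two places where LARC plus analyticity must do real work: the density of $A(x,S)$ and the openness of $W$. Both are standard reachable-set regularity results, but a careful write-up would need to invoke (or re-derive) the existence of controls whose endpoint map is a local submersion --- the \emph{normal-reachability} form of local controllability --- which is where the real-analytic hypothesis is essential, since it upgrades the accessibility implied by LARC to the Sussmann-type conclusion that every reachable point can be approached by regular trajectories.
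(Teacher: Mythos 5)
Your proposal breaks down at the very last step, and the break is fatal rather than cosmetic. The Baire argument correctly produces a control $u_0$ lying in $\bigcap_n A(x_n,S_n)$ for a \emph{countable} dense family $\{(x_n,S_n)\}$ (the openness and density of each $A(x,S)$ are plausible and essentially provable along the lines you sketch, modulo the usual care about hitting $\inner\OC^{+}_{\leq\delta}(y)$ at exactly time $\delta$ and about perturbations staying inside $\Omega$). But the globalization ``the set $\{(x,S):\phi(S,x,u_0)\in\inner\OC^+(x)\}$ is open and contains a dense sequence, hence equals $M\times(0,\infty)$'' is a non sequitur: an open set containing a dense sequence is merely open and dense, not the whole space ($\R\setminus\{0\}$ contains $\mathbb{Q}\setminus\{0\}$). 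To close the argument you would need the set to be closed as well (which would require $W$ to be \emph{closed}, and it is not: reachable-set boundaries obstruct this), or some connectedness/clopen argument you do not have. This is exactly the hard content of the lemma: passing from ``$u_0$ works for generic $(x,S)$'' to ``$u_0$ works for \emph{all} $(x,S)$'' is where real analyticity must do irreplaceable work, via the structure of analytic sets (for an analytic system and a fixed control, the set of bad $(x,S)$ is analytic, hence either everything or thin). A pointwise Baire intersection over a countable dense family cannot reach this conclusion. A secondary, independent weakness is that the joint openness of $W=\{(y,z):z\in\inner\OC^+(y)\}$ is asserted but not proved; the normal-reachability statement you invoke (every point of $\inner\OC^+(y_0)$ is reachable by a trajectory along which the endpoint map is a submersion \emph{with control perturbations confined to $\Omega$}, jointly in the initial point) is itself a nontrivial theorem that would need a precise citation.

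For comparison, the paper avoids all of this by a different route: it augments the system with a time-reparametrization control $v\in(1/\alpha,\alpha)$ multiplying the whole right-hand side, checks that the augmented system is \emph{strongly} accessible whenever the original satisfies the LARC (because $f_0$ itself lands in the strong-accessibility ideal of the augmented system), and then invokes the universal regular controls theorem for real-analytic strongly accessible systems (Colonius--Kliemann, Theorem A.4.15), which delivers a single control whose time-$S$ endpoint lies in $\inner\OC^{+}_S(x)$ for \emph{all} $x$ and $S$ simultaneously; undoing the time change gives property (\ref{inner}). If you want to keep your self-contained flavor, you would essentially have to reprove that universality theorem, which is where the analyticity-based ``for all $(x,S)$'' quantifier actually comes from.
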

\begin{proof} For any $\alpha>1$, let us consider the control-affine systems on $M$ given by
\begin{flalign*}
\label{CAS}
&&\,
\dot{y}=f^{\alpha}(x, (u, v)):=v\left[f_0(y)+\sum_{i=j}^{m}u_jf_j(y)\right],\hspace{.5cm} (u, v)\in \mathcal{U}\times\mathcal{V}^{\alpha}
&&\hspace{-1cm}\left(\Sigma^{\alpha}_{M}\right),
\end{flalign*}
where $\mathcal{V}^{\alpha}:=\left\{v:\mathbb{R}\mapsto \mathbb{R},v(S)\in \left(\frac{1}{\alpha},\alpha\right),S\in\mathbb{R}\right\}.$
For each $\alpha>1$, it holds that $\LC\subset \LC_0^{\alpha}$. In fact, for the $v_1=v_2=v\in\left(\frac{1}{\alpha}, \alpha\right)$ and $u_1, u_2\in\Omega$, we get that
$$f^{\alpha}_{(v_1, u_1)}-f^{\alpha}_{(v_2, u_2)}=v(f_{u_1}-f_{u_2})\hspace{.5cm}\implies\hspace{.5cm}\LC_0\subset\LC_0^{\alpha}.$$
On the other hand, by considering $v_1=v_2=v\in\left(\frac{1}{\alpha}, \alpha\right)$ with $v_1\neq v_2$ and $u_1=u_2=0\in\Omega$, we get
$$f^{\alpha}_{(v_1, u_1)}-f^{\alpha}_{(v_2, u_2)}=(v_1-v_2)f_0\hspace{.5cm}\implies\hspace{.5cm}f_0\in\LC_0^{\alpha}.$$
Since, in the control-affine case, $\LC$ coincides with the Lie algebra generated by $f_0, f_1, \ldots, f_m$ and $\LC_0$ with the ideal generated by $f_1, \ldots, f_m$, the previous calculations imply that $\LC\subset \LC_0^{\alpha}$. Therefore, if $\Sigma_M$ satisfies the LARC, then
$$\forall x\in M, \hspace{.5cm}T_xM=\LC(x)\subset \LC_0^{\alpha}\subset T_xM,$$
which, by our previous discussion, implies that $\Sigma_M^{\alpha}$ is strongly accessible. Since these systems are also real analytic, Theorem A.4.15 of \cite{FCWK} assures the existence of a control function $(u^{\alpha}, v^{\alpha})\in\UC\times\mathcal{V}^{\alpha}$, satisfying
$$\phi^{\alpha}(S, x, u^{\alpha}, v^{\alpha}) \in \inner \mathcal{O}_S^{\alpha,+}(x), \hspace{.5cm}\forall x\in M, S>0,$$
where $\mathcal{O}_S^{\alpha,+}(x)$ is the positive orbit of $x$ for the system $\Sigma_M^{\alpha}$. However, by Lemma 4.5.18 of \cite{FCWK} and its proof, for any $x\in M$ and $S>0$, we get that
$$\OC^{\alpha, +}_{\leq S}(x)\subset\OC^+_{\leq S}(x),$$
and 
    \begin{equation*}
        \phi(\tau^{-1}(S),x,u)=\phi^{\alpha}(S,x,u^{\alpha},v^{\alpha})\in \inner\mathcal{O}^{\alpha,+}_S(x)\subset\inner\mathcal{O}^{\alpha,+}_{\leq S}(x)\subset\inner\mathcal{O}^+_{\leq \alpha S}(x)\subset\inner\mathcal{O}^+(x).
    \end{equation*}
where $\tau: \mathbb{R}^+ \mapsto \mathbb{R}^+$ is the bijective function defined as
$$ \tau(S):=\int_0^S v(t)dt \hspace{.5cm}\mbox{ and }\hspace{.5cm}u(S):=u^{\alpha}(\tau(S)),$$
concluding the proof.
\end{proof}

\bigskip

Next we introduce the concept of control sets encountered \cite[Definition 3.1.2]{FCWK}. 

\begin{definition}
\label{Control}
A set $D\subset M$ is a control set of $\Sigma_{M}$ if it is
maximal, w.r.t. set inclusion, with the following properties:
\begin{enumerate}
\item $\forall x\in D$, there exists a control $u\in \mathcal{U}$
such that $\phi \left(\mathbb{R}^{+},x, u\right) \subset D$;
\item $\forall x\in D$, it holds that $D\subset \overline{\mathcal{O}^{+}(x)}$.
\end{enumerate}
\end{definition}

The control sets of a system play an important role since
several dynamical entities of the system are contained in these sets, such as equilibrium points, recurrent
points, periodic and bounded orbits, etc (see \cite[Chapter 4]{FCWK}). In particular, exact controllability
holds in the interior of such control sets, that is, 
$$\forall x, y\in\inner D, \hspace{.5cm}\exists \;S>0, u\in\UC; \hspace{.5cm}y=\phi(S, x, u).$$

\begin{remark}
\label{interiorControl}
By \cite[Proposition 3.2.5]{FCWK}, any subset $D_0\subset M$ satisfying properties 1. and 2. in Definition \ref{Control} is contained in a control set. In particular, if $x\in\inner\OC^+(x)$, the set $D_0=\inner\OC^+(x)\cap \inner\OC^-(x)$ is a nonempty open set satisfying 1. and 2. Therefore, there exists a control set $D_x$ satisfying $x\in\inner D_x$. This fact will help us to assure the existence of control sets ahead.
\end{remark}

The next lemma shows that the continuous image of a connected topological space can be in the interior of, at most, one control set.

\begin{lemma}
\label{connected}
Let $X$ be a connected topological space and $\gamma:X\rightarrow M$ be a continuous map. If for any $x\in X$ there exists a control set $D_x$, satisfying $\gamma(x)\in \inner D_x$, then
$D_x=D_y$ for all $x, y\in X$.
\end{lemma}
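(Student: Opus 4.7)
Fix $y_0 \in X$ and set $A := \{x \in X : D_x = D_{y_0}\}$. Since $y_0 \in A$ and $X$ is connected, it suffices to show that $A$ is both open and closed.

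For openness: if $x \in A$, then $\gamma(x) \in \inner D_{y_0}$, and by continuity of $\gamma$ there is an open neighborhood $U$ of $x$ in $X$ with $\gamma(U) \subset \inner D_{y_0}$. For any $z \in U$, the point $\gamma(z)$ lies in $\inner D_{y_0} \cap \inner D_z$, so $D_{y_0}$ and $D_z$ are control sets with nonempty interior sharing an interior point. The standard fact that two such control sets must coincide then gives $D_z = D_{y_0}$, i.e., $z \in A$. Closedness follows by the symmetric argument: if $x \in X \setminus A$, then $D_x \neq D_{y_0}$; continuity provides a neighborhood $U$ of $x$ with $\gamma(U) \subset \inner D_x$, and for any $z \in U$ we obtain $D_z = D_x \neq D_{y_0}$, so $U \subset X \setminus A$.

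The only nontrivial ingredient is the uniqueness statement that two control sets with nonempty interior sharing an interior point must coincide. One way to establish this is to verify that $D_1 \cup D_2$ satisfies conditions 1 and 2 of Definition~\ref{Control} and then invoke maximality: property 1 is immediate, while property 2 follows by using property 2 of each $D_i$ to steer arbitrarily close to the common interior point, followed by exact controllability inside $\inner D_1$ and $\inner D_2$ (noted in Remark~\ref{interiorControl}) to reach dense subsets of both $D_1$ and $D_2$. I expect this uniqueness lemma to be the only real work; the clopen-dichotomy step is routine.
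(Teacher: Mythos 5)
Your proof is correct and follows essentially the same route as the paper: the map $x\mapsto D_x$ is locally constant by continuity of $\gamma$ plus the fact that two control sets sharing an interior point coincide, so the set where $D_x=D_{y_0}$ is clopen and connectedness finishes the argument. You make explicit (and correctly sketch) the uniqueness-of-control-sets-through-an-interior-point ingredient that the paper's one-line proof uses implicitly.
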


\begin{proof} Since, for any $x\in X$, it holds that $\gamma(x)\in\inner D_x$, the continuity of $\gamma$ assures the existence of a neighborhood $V$ of $x$ such that $\gamma(V)\subset\inner D_x$, which implies that $D_y=D_x$ if $y\in V$. In particular, for any $x_0\in X$, the set
$X_0:=\{x\in X; D_x=D_{x_0}\}$ is open and closed, which by the connectedness of $X$ implies the result.
\end{proof}
\bigskip

In what follows, we introduce the concept of conjugation between control-affine systems and analyze their relationship with control sets.

Let $N$ be another smooth manifold and
\begin{flalign*}
\label{CAS}
&&\,
\dot{y}(t)=g_{0}(y(t))+\sum_{j=1}^{m}u_{j}(t)g_{j}(y(t)),,\hspace{.5cm} u=(u_1, \ldots, u_m)\in\UC
&&\hspace{-1cm}\left(\Sigma_N\right),
\end{flalign*}
a control-affine system on $N$.

\begin{definition}\label{defi:conjugado}
If $\psi :M\rightarrow N$ is a smooth map, we say
that $\Sigma _{M}$ and $\Sigma _{N}$ are $\psi $-conjugated if their
respective vector fields are $\psi $-conjugated, that is,
\begin{equation*}
\psi _{\ast }\circ f_{j}=g_{j}\circ \psi
\end{equation*}
for each $j\in \{0,1,\ldots ,m\}$. If such a $\psi $ exists, we say that $\Sigma _{M}$ and $\Sigma _{N}$ are conjugated. In particular, if $\psi $ is a diffeomorphism, then $\Sigma _{M}$ and $\Sigma _{N}$ are called
equivalent systems.
\end{definition}

It is straightforward to see that controllability, topological
properties of control sets, and positive (or negative) orbits are related in conjugated systems. For instance, if $\Sigma_M$ satisfies property (\ref{inner}), then certainly $\Sigma_N$ also satisfies it. This and other properties of conjugated systems will be heavily used to prove our main results in the next sections. In the same direction, the next result, whose proof can be found, for instance, in \cite[Proposition 3.10]{DS1}, relates control sets between conjugated systems.




\begin{proposition}
\label{conjugation}
    Let $\Sigma_M$ and $\Sigma_{N}$ be $\psi$-conjugated control-affine systems. It holds:
\begin{enumerate}
    \item[1.] If $D_M$ is a control set of $\Sigma_M$, there exists a control set $D_N$ of $\Sigma_{N}$ such that $\psi(D_M)\subset D_N$;
    \item[2.] If for some $y_0\in \inner D_N$ it holds that $\psi^{-1}(y_0)\subset\inner D_M$, then $D_M=\psi^{-1}(D_N)$.
\end{enumerate}
    
\end{proposition}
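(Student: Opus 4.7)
Plan: The key engine is that $\psi$-conjugation intertwines trajectories: for every $x\in M$, $u\in\UC$ and $t\in\R$,
\[
\psi(\phi_M(t,x,u))=\phi_N(t,\psi(x),u),
\]
which immediately yields $\psi(\OC_M^{\pm}(x))\subseteq \OC_N^{\pm}(\psi(x))$ for every $x\in M$. I would use this identity systematically in what follows.

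For part 1, I would verify that $\psi(D_M)$ satisfies conditions~(1) and (2) of Definition~\ref{Control} (without the maximality requirement) and then apply Remark~\ref{interiorControl} to conclude $\psi(D_M)\subseteq D_N$ for some control set $D_N$ of $\Sigma_N$. Condition~(1) transfers directly: the control $u$ keeping $\phi_M(\R^{+},x,u)\subseteq D_M$ also keeps $\phi_N(\R^{+},\psi(x),u)\subseteq\psi(D_M)$. For condition~(2), given $\psi(x'),\psi(x)\in\psi(D_M)$, an approximating sequence $z_n\in\OC_M^{+}(x)$ with $z_n\to x'$ is sent by continuity of $\psi$ to an approximating sequence $\psi(z_n)\in\OC_N^{+}(\psi(x))$ with $\psi(z_n)\to\psi(x')$.

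For part 2, the inclusion $D_M\subseteq\psi^{-1}(D_N)$ is immediate from part 1 since $\psi(D_M)\subseteq D_N$. For the converse, fix $z\in\psi^{-1}(D_N)$ and any $x_0\in\psi^{-1}(y_0)\subseteq\inner D_M$. Since $y_0,\psi(z)\in D_N$ and $y_0\in\inner D_N$, I would combine approximate controllability in $D_N$ with the exact controllability available inside $\inner D_N$ (via the open set $\inner\OC_N^{+}(y_0)\cap\inner\OC_N^{-}(y_0)$ from Remark~\ref{interiorControl}) to produce a control $u$ and a time $T>0$ steering $\psi(z)$ \emph{exactly} to $y_0$: concatenate an approximating trajectory from $\psi(z)$ arriving in a neighborhood $V\subseteq\inner D_N$ of $y_0$ with exact steering inside $V$. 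Lifting via the intertwining identity, $\phi_M(T,z,u)\in\psi^{-1}(y_0)\subseteq\inner D_M$, and then property~(1) of $D_M$ at that endpoint gives a control whose forward trajectory from $z$ has a tail inside $D_M$. A symmetric lifting starting at $x_0$---approximating $\psi(z)$ from $y_0$ in $D_N$ and upgrading to exact steering whenever $\psi(z)\in\inner D_N$---yields $z\in\overline{\OC_M^{+}(x_0)}$, and by concatenation $z\in\overline{\OC_M^{+}(x)}$ for every $x\in D_M$. Hence $D_M\cup\{z\}$ satisfies conditions~(1) and (2) of Definition~\ref{Control}, and maximality of $D_M$ forces $z\in D_M$.

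The principal technical obstacle lies in this last symmetric step: because $\psi$ is in general not injective, an approximating sequence $\phi_N(t_n,y_0,u_n)\to\psi(z)$ in $N$ lifts to a sequence $p_n:=\phi_M(t_n,x_0,u_n)$ whose $\psi$-images converge to $\psi(z)$ but which need not converge to $z$ itself. The hypothesis that the \emph{entire} fiber $\psi^{-1}(y_0)$ lies in $\inner D_M$---rather than merely a single preimage---is precisely what enables one to control these limits, by forcing the lifted trajectories to be eventually confined to an open set sitting inside $\inner D_M$; this is consistent with the detailed argument in \cite[Proposition~3.10]{DS1}.
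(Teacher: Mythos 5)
Part 1 of your proposal, and the inclusion $D_M\subseteq\psi^{-1}(D_N)$ in part 2, are correct and follow the standard route (push the two defining properties forward through the intertwining identity, invoke Remark~\ref{interiorControl}, and identify the resulting control set with $D_N$ because both contain $y_0$ and $y_0\in\inner D_N$). Note that the paper itself gives no proof here but defers to \cite[Proposition 3.10]{DS1}, so the comparison below is against the argument that reference (and every application in this paper) actually relies on.

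The reverse inclusion $\psi^{-1}(D_N)\subseteq D_M$ is where your proposal has a genuine gap, and it sits exactly where you flagged it. You need, for each $z\in\psi^{-1}(D_N)$, that $z\in\overline{\OC^{+}_M(p)}$ for points $p\in\inner D_M$. Downstairs you only obtain $\phi_N(t_n,y_0,u_n)\to\psi(z)$, and the lifts $p_n:=\phi_M(t_n,x_0,u_n)$ satisfy $\psi(p_n)\to\psi(z)$ without any reason to converge to $z$. Your proposed remedy --- that $\psi^{-1}(y_0)\subset\inner D_M$ ``forces the lifted trajectories to be eventually confined to an open set sitting inside $\inner D_M$'' --- is an assertion, not an argument, and I do not see how it could be made one: $\inner D_M$ is an open set containing a single fiber, but for a non-proper $\psi$ with non-compact fibers it need not contain $\psi^{-1}(V)$ for any neighbourhood $V$ of $y_0$, so the information $\psi(p_n)\to\psi(z)$ places no constraint on where the $p_n$ lie. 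What actually closes the argument in the cited proof is extra structure on $\psi$: in all uses here $\psi$ is a quotient $\pi:G\to G/H$ by a $\varphi$-invariant subgroup, the equivariance $\phi_{S,u}\circ R_g=R_{\varphi_S(g)}\circ\phi_{S,u}$ makes each solution map carry the fiber over $y$ \emph{onto} the fiber over $\phi_N(S,y,u)$, and local triviality of $\pi$ lets one pick $z_n\in\pi^{-1}(\phi_N(t_n,y_0,u_n))$ with $z_n\to z$; each such $z_n$ is then exactly reachable from some $x_n\in\pi^{-1}(y_0)\subset\inner D_M$, hence from any fixed $p\in\inner D_M$ by exact controllability in $\inner D_M$, giving $z\in\overline{\OC^{+}_M(p)}$. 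Without some hypothesis of this kind your argument does not go through.

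A secondary, fixable imprecision: adjoining only the point $z$ to $D_M$ cannot verify condition 1 of Definition~\ref{Control}, because the trajectory from $z$ on $[0,T]$ may leave $D_M\cup\{z\}$ before entering $\inner D_M$. You should adjoin the whole arc $\phi_M([0,T],z,u)$ --- which stays in $\psi^{-1}(D_N)$ since its $\psi$-image is an exact trajectory between two points of the control set $D_N$ and hence remains in $D_N$ --- or, more efficiently, verify properties 1 and 2 directly for the set $D_M\cup\psi^{-1}(D_N)$ and conclude by maximality.
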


\section{Linear control systems and their control sets}

In this section we introduce the class of linear control systems on Lie groups. These systems appeared first as a natural generalization of their counterparts in Euclidean spaces (see \cite{Tirao}). Recently, however, their importance was highlighted by the Equivalence Theorem of \cite[Theorem 5]{JPh1}, which assures that any control-affine system on a connected manifold that is transitive and generates a finite-dimensional Lie algebra is equivalent to a linear control system on a Lie group or on a homogeneous space.

Let us, as previously, consider $G$ to be a connected Lie group with Lie algebra $\fg$ identified with the set of right-invariant vector fields. A {\bf linear control system (LCS)} on $G$ is defined by the family of ODEs,
\begin{flalign*}
&&\dot{g}(t)=\mathcal{X}(g)+\sum_{j=1}^mu_jY_j(g), \hspace{.5cm} u=(u_1, \ldots, u_m)\in\UC, 
&&\hspace{-1cm}\left(\Sigma_G\right)
\end{flalign*}
where $\XC$ is a linear vector field and $Y_0, Y_1 \ldots, Y_m$ are right-invariant ones. For any $g\in G$ and $u\in \UC$, the solution $S\mapsto \phi(S, g, u)$ of $\Sigma_G$ is defined on the whole real line $\R$ and satisfies
$$\phi_{S, u}\circ R_g=R_{\varphi_S(g)}\circ\phi_{S, u},$$
where, for simplicity, $\phi_{S, u}(g):=\phi(S, g, u)$. Since linear and invariant vector fields are analytics, if $\Sigma_G$ satisfies the LARC, then it admits a control function $u_0\in\UC$ satisfying property (\ref{inner}). We will use this fact ahead.

Another fact that will be important ahead is conjugations given by canonical projections. Let $H\subset G$ be a Lie subgroup and consider the homogeneous space $G/H$. According to \cite[Proposition 4]{JPh1}, a linear vector field projects to a well-defined vector field on $G/H$ if and only if $H$ is $\varphi$-invariant. Therefore, for a $\varphi$-invariant subgroup $H$, a linear control system $\Sigma_G$ induces a $\pi$-conjugated control-affine system $\Sigma_{G/H}$ on the homogeneous space $G/H$, where $\pi:G\rightarrow G/H$ is the canonical projection. 



Denote by $G_0$ the generalized kernel of $\XC$ and by $N$ the nilradical of $G$. The generalized kernel of the restriction $\XC|_N$ is then $N_0=N\cap G_0$. Our main result states that if $\Sigma$ satisfies the LARC and $N_0$ is a compact subgroup, then $\Sigma$ admits a unique control set with a nonempty interior. Moreover, this control set contains $G_0$ in its closure. 

In order to prove the previous, we start by showing that we can get rid of any $\varphi$-invariant compact subgroup of $G$. This will help us to simplify our analysis to simply connected spaces.

\begin{proposition}
\label{compactfiber}
  Let $\Sigma_G$ be a linear control system on $G$ satisfying the LARC and $H\subset G$ a $\varphi$-invariant compact subgroup. If the induced control-affine system $\Sigma_{G/H}$ on the homogeneous space $G/H$ admits a control set $D$ with a nonempty interior, then $\pi^{-1}(D)$ is a control set of $\Sigma_G$ with a nonempty interior, where $\pi:G\rightarrow G/H$ is the canonical projection.
\end{proposition}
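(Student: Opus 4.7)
I would verify properties (1) and (2) of Definition~\ref{Control} for $\pi^{-1}(D)$ directly and then invoke Proposition~\ref{conjugation}(1) to pin down the ambient control set. Nonempty interior follows from the openness of $\pi$: since $\inner D\neq\emptyset$ and $\pi$ is continuous, $\pi^{-1}(\inner D)$ is a nonempty open subset of $\pi^{-1}(D)$.

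Property (1) is immediate from $\pi$-conjugation: for any $g\in\pi^{-1}(D)$ and any control $u\in\UC$ with $\phi_{G/H}(\R^+,\pi(g),u)\subset D$, the lift $\phi_G(\R^+,g,u)$ stays in $\pi^{-1}(D)$. For property (2), pick $g_1,g_2\in\pi^{-1}(D)$. From $\pi(g_2)\in\overline{\OC_{G/H}^+(\pi(g_1))}$ one obtains a sequence of trajectories $\phi_G(T_n,g_1,v_n)$ whose projections tend to $\pi(g_2)$; choosing a local section of $\pi$ near $\pi(g_2)$ and using the compactness of $H$, a subsequence converges in $G$ to some $g_2h^*\in\overline{\OC_G^+(g_1)}$. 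Because $x\in\overline{\OC_G^+(g_1)}$ implies $\overline{\OC_G^+(x)}\subset\overline{\OC_G^+(g_1)}$ (by continuous dependence of trajectories on initial conditions), it suffices to prove the within-fiber claim $g_2\in\overline{\OC_G^+(g_2h^*)}$.

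The within-fiber claim is where the compactness and $\varphi$-invariance of $H$ are essential. These hypotheses force $\DC|_{\fh}$ to have purely imaginary eigenvalues, so $\{\varphi_t|_H\}$ is an elliptic flow on $H$ and Remark~\ref{elliptical} furnishes recurrence sequences $\varphi_{n_kS_0}(h)\to h$ on $H$. The cocycle identity $\phi_G(S,gh,u)=\phi_G(S,g,u)\,\varphi_S(h)$, which follows from $\phi_{S,u}\circ R_g=R_{\varphi_S(g)}\circ\phi_{S,u}$, reformulates $g\in\overline{\OC_G^+(gh^*)}$ as the approximate attainment of $g\,\varphi_S(h^*)^{-1}$ from $g$ at some time $S$. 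Combining return controls, which exist at every interior point of $D$ thanks to local accessibility of $\Sigma_{G/H}$, with the universal control from Lemma~\ref{universalcontrol}, one shows that the set of reachable fiber shifts $g^{-1}\OC_G^+(g)\cap H$ has nonempty interior in $H$; the elliptic recurrence then matches the required shift $\varphi_S(h^*)^{-1}$ to an element of this reachable set arbitrarily well.

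Once (1) and (2) are in hand, \cite[Proposition~3.2.5]{FCWK} (as recorded in Remark~\ref{interiorControl}) provides a control set $D_G$ of $\Sigma_G$ with $\pi^{-1}(D)\subset D_G$. Proposition~\ref{conjugation}(1) then yields a control set $D'$ of $\Sigma_{G/H}$ with $\pi(D_G)\subset D'$; since $D\subset D'$, maximality of $D$ gives $D'=D$, whence $D_G\subset\pi^{-1}(D)$ and so $D_G=\pi^{-1}(D)$. The main obstacle is the within-fiber step: one must combine the elliptic recurrence on the compact group $H$, the $\varphi$-twisted right action on trajectories, and LARC-based local accessibility to ensure that fiber returns fill out an open subset of $H$.
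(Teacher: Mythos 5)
Your overall skeleton (handle the base direction by conjugation, handle the fiber direction using compactness of $H$, then identify the control set by maximality or by Proposition \ref{conjugation}) is the right one, and your reduction of property (2) to the within-fiber claim $g_2\in\overline{\OC^+_G(g_2h^*)}$ is sound. But the within-fiber step — which you yourself flag as the main obstacle — is where the argument has a genuine gap. Knowing that the set of reachable fiber shifts $g^{-1}\OC^+_G(g)\cap H$ has nonempty interior in $H$ does not let you approximate the \emph{specific} element $\varphi_S(h^*)^{-1}$ (and, since $h^*$ can be arbitrary, you in fact need the reachable shifts to be dense in all of $H$). Moreover, the shifts do not form an ordinary subsemigroup of $H$: concatenating controls composes them via the twisted law $(h_1,S_1)\cdot(h_2,S_2)\mapsto h_2\,\varphi_{S_2}(h_1)$, so the standard fact that a subsemigroup of a compact connected group with nonempty interior is everything does not apply directly, and the elliptic recurrence $\varphi_{n_kS_0}(h)\to h$ controls the automorphism, not the twisted product $\varphi_{(n-1)S}(h_0)\cdots\varphi_S(h_0)h_0$ that actually appears when you iterate a return control. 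As written, "matches the required shift arbitrarily well" is an assertion, not a proof.

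For comparison, the paper closes exactly this gap by a different mechanism: since $H$ is compact, $\varphi_S|_H=C_{\rme^{SX}}$ is an \emph{inner} automorphism, and torsion elements are dense in a compact Lie group; one can therefore choose a reachable shift $g_1$ (from the open set $x^{-1}\inner\OC^+_S(x)\cap H$) with $g_1\rme^{-SX}$ of finite order, so that the twisted product telescopes and the periodic extension of the control gives an \emph{exact} return $\phi(k_2k_1S,x,u)=x\in\inner\OC^+(x)$. This yields a control set $C_x$ with $x\in\inner C_x$ for every $x\in\pi^{-1}(\inner D)$; path-connectedness of the fiber $xH$ and Lemma \ref{connected} then force all these control sets to coincide, and Proposition \ref{conjugation}(2) identifies the result with $\pi^{-1}(D)$. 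If you want to keep your outline, you should replace the "nonempty interior plus recurrence" heuristic with this inner-automorphism/torsion-density argument (or supply an actual proof that the twisted shift semigroup is dense in $H$); the rest of your proposal, including the final maximality argument, then goes through.
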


\begin{proof}
Since the canonical projection $\pi: G\rightarrow G/H$ conjugates the systems, it holds that
$$\forall g\in G, S>0, \hspace{.5cm}\pi(\OC_{S}^+(g))=\widehat{\OC}_{S}^+(\pi(g)).$$
The previous, together with the continuity of $\pi$ and the fact that $\inner\OC_{S}^+(g)$ is dense in $\OC^+_{S}(g)$ for any $g\in G$ and $S>0$ (see \cite[Chapter3, Theorem 2]{Jurd}), implies that
$$\pi(g)\in\pi\left(\overline{\inner\OC_{S}^+(g)}\right)\subset\overline{\pi(\inner\OC^+_{S}(g))}.$$
Let then $x\in\inner D$ and write it as $x=\pi(g)$. By exact controllability, there exists $S_0>0$ such that $x\in\widehat{\OC}_{S_0}^+(x)=\pi(\OC^+(g))$. Therefore,
$$\inner D\ni x\in \pi(\OC_{S_0}^+(g))\subset \pi(\overline{\OC_{S_0}^+(g)})=\pi\left(\overline{\inner\OC_{S_0}^+(g)}\right)\subset\overline{\pi(\inner\OC^+_{S_0}(g))},$$
implying the existence of $g'\in\inner\OC^+_{S_0}(g)$ such that $\pi(g')\in\inner D$. By exact controllability, there exists $u_1\in\UC$ and $S_1>0$ such that
$$\pi(\phi(S_1, g', u_1))=\widehat{\phi}(S_1, \pi(g'), u_1)=\pi(g)\hspace{.5cm}\implies \hspace{.5cm} \emptyset\neq gH\cap \phi_{S_1, u_1}(\inner\OC^+_{S_0}(g))\subset xH\cap\inner\OC^+_{S_1+S_0}(g)$$

The previous shows that, for any $g\in \pi^{-1}(\inner D)$, there exists $S>0$ such that
 $$g^{-1}\inner\OC_S^+(g)\cap H\neq\emptyset.$$
 Since $H$ is a compact subgroup, $\varphi_S|_{H}=C_{\rme^{SX}}$ for some $X\in \fh$. Moreover, the fact that $\rme^{S X}\in \fix(\varphi|_H)\subset H$ implies that
 $$R_{\rme^{-S X}}\left(x^{-1}\inner\OC_{S}^+(x)\cap H\right),$$
 is an open subset of $H$. As a consequence, there exists $g_1\in x^{-1}\inner\OC_{S}^+(x)\cap H$ such that $g_1\rme^{-S X}$ has finite order. Let $u_1\in\UC$ such that $\phi(S, x, u_1)=xg_1$ and extend it $S$-periodically. Inductively, one easily gets that
  $$\forall n\in\N, \hspace{.5cm}\phi(nS, x, u_1)=x\varphi_{(n-1)S}(g_1)\cdots \varphi_{S}(g_1)g_1=x\rme^{(n-1)S X}(g_1\rme^{-S X})^{n}\rme^{S X}.$$
  Therefore, if $k_1\in \N$ is such that $(g_1\rme^{-S X})^{k_1}=e$, it holds that
$$\phi(k_1S, x, u_1)=x\rme^{k_1S X}\hspace{.5cm}\implies\hspace{.5cm}x^{-1}\inner\OC^+_{k_1S}(x)\cap\fix(\varphi|_H)\neq\emptyset.$$
Since $\fix(\varphi|_H)$ is a compact subgroup and $x^{-1}\inner\OC^+_{k_1S}(x)\cap\fix(\varphi|_H)$ is a nonempty open subset of $\fix(\varphi|_H)$, there exist $g_2\in \fix(\varphi|_H)$, $u_2 \in\UC$, and $k_2\in\N$ such that
$$\phi(k_1S, x, u_2)=xg_2\in\inner\OC^+_{k_1 S}(x)\hspace{.5cm}\mbox{ and }\hspace{.5cm}g_2^{k_2}=e.$$
As in the previous case, one can show inductively that
$$\forall m\in\N, \hspace{.5cm}\phi(mk_1 S, x, u_2)=xg^m_2\hspace{.5cm}\implies\hspace{.5cm}\phi(k_2k_1S, x, u_2)=x\in\inner\OC^+_{k_2k_1S}(x).$$
In particular, $x\in\inner\OC^+(x)$ implies the existence of a control set $C_x$ with $x\in\inner C_x$ for any $x\in\pi^{-1}(\inner D)$. Since the fiber $\pi^{-1}(\pi(x))=xH$ is path-connected, we concluded by Lemma \ref{connected} that
$$\forall y\in \pi^{-1}(\pi(x)), \hspace{.5cm} C_x=C_y:=C.$$
Since $x\in\inner D$ and $\pi^{-1}(\pi(x))\subset\inner C$, Proposition \ref{conjugation} implies that $C=\pi^{-1}(D)$ is a control set of $\Sigma_G$ as stated.
\end{proof}

\subsection{Uniqueness}

In this section we prove that, if $N_0$ is a compact subgroup, the LCS $\Sigma$ admits, at most, one control set with a nonempty interior.

Before stating and proving such a result, let us make a comment that will help us in the proof. Let $H\subset G$ be a connected normal subgroup with Lie algebra $\fh$. If $H$ is $\varphi$-invariant, the LCS $\Sigma_G$ induces a $\pi$-conjugated LCS $\Sigma_{\widehat{G}}$ on the homogeneous space $\widehat{G}:=G/H$. If $\fh\subset\fn$, then the nilradical $\widehat{\fn}$ of $\widehat{\fg}=\fg/\fh$ satisfies $(d\pi)_e^{-1}(\widehat{\fn})=\fn$. Moreover, since $\pi$ conjugates the system, it holds that
$$(d\pi)_e\circ\DC=\widehat{\DC}\circ(d\pi)_e,$$
where $\DC$ and $\widehat{\DC}$ are the derivations of the systems $\Sigma_G$ and $\Sigma_{\widehat{G}}$, respectively. The previous relation implies directly that
\begin{equation}
\label{nilradicalinduzido}
 \widehat{\fn}_0=(d\pi)_e\left(\{X\in\fn; \DC^nX\in\fh, \mbox{ for some }n\in\N\}\right).
\end{equation}

We can now state and prove the main result of this section.

\begin{theorem}
\label{uniqueness}
Let $\Sigma_G$ be a linear control system on a solvable Lie group $G$, and denote by $N$ the nilradical of $G$. If $N_0=N\cap G_0$ is compact, then $\Sigma_G$ admits (at most) one control set with a nonempty interior.
\end{theorem}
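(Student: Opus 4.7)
The plan is a two-stage reduction. Since $N_0=N\cap G_0$ is the intersection of two $\varphi$-invariant subgroups and is compact by hypothesis, Proposition~\ref{compactfiber} applies with $H=N_0$, producing an induced LCS on $\widehat G:=G/N_0$. To upgrade that proposition into a bijection between control sets with nonempty interior on $G$ and on $\widehat G$, I would argue as follows: given such a $D\subset G$, Proposition~\ref{conjugation}(1) places $\pi(D)$ inside a control set $\widehat D$ of $\Sigma_{\widehat G}$, which has nonempty interior because $\pi$ is open; Proposition~\ref{compactfiber} then yields that $\pi^{-1}(\widehat D)$ is itself a control set, and inspection of its proof shows that the interior of $\pi^{-1}(\widehat D)$ contains every fiber $\pi^{-1}(\widehat x)$ with $\widehat x\in\inner\widehat D$. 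Picking $x\in\inner D$, the fiber through $x$ meets $\inner\pi^{-1}(\widehat D)$; two control sets with intersecting interiors coincide (by the maximality in Definition~\ref{Control} together with exact controllability in their interiors), so $D=\pi^{-1}(\widehat D)$. By~\eqref{nilradicalinduzido} we have $\widehat{\fn}_0=\{0\}$, and so we may assume $N_0=\{e\}$ from now on.

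Under this assumption, Lemma~\ref{conj}(3) promotes $\psi:G_0\times_\rho\fn\to G$ to an isomorphism conjugating $\XC$ to the product $\XC|_{G_0}\times\DC|_\fn$. Since $\fg_0\cap\fn=\fn_0=\{0\}$ and any derivation of a solvable algebra lands in the nilradical, $\DC(\fg_0)\subset\fg_0\cap\fn=\{0\}$; hence $\DC|_{\fg_0}=0$, and by~\eqref{derivativeonorigin} the flow $\varphi_t$ of $\XC$ fixes every point of $G_0$. The commutation rule $\phi_{t,u}\circ R_g=R_{\varphi_t(g)}\circ\phi_{t,u}$ then specializes, for $g\in G_0$, to $\phi_{t,u}\circ R_g=R_g\circ\phi_{t,u}$; thus right translation by elements of $G_0$ is an equivariance of $\Sigma_G$, and if $D$ is a control set then so is $D\cdot g$ for each $g\in G_0$.

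Given two control sets $D_1,D_2$ with nonempty interior in this reduced setting, the strategy is to show that $e\in\overline{D_i}$ for each $i$; the $G_0$-equivariance will then upgrade this to $G_0\subset\overline{D_i}$, and for any $g\in G_0\subset\overline{D_1}\cap\overline{D_2}$ one can steer nearby interior points of $D_1$ into $\inner D_2$ using the exact controllability in the interiors and a curve of controls obtained by translating along $G_0$, forcing $\inner D_1\cap\inner D_2\ne\emptyset$ and hence $D_1=D_2$. To establish $e\in\overline D$, I would use Lemma~\ref{universalcontrol} (available since a nonempty-interior control set forces the LARC in this real-analytic setting) to steer $x\in\inner D$ into $\inner\OC^+(x)$, and then combine the identity action of the drift on $G_0$ with the elliptic recurrence of Remark~\ref{elliptical} in the elliptic directions of $\DC|_\fn$ to drive the $\fn$-component of a trajectory arbitrarily close to $0$ while the $G_0$-component is steered onto any prescribed $h\in G_0$. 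The main obstacle is exactly this final accumulation: keeping the constructed trajectory inside $D$ while simultaneously taming the possibly elliptic directions (via recurrence) and the hyperbolic ones (via careful choice of controls) of the invertible derivation $\DC|_\fn$ is the genuine technical core of the proof, where the semidirect-product structure from Lemma~\ref{conj}(3) and the invertibility of $\DC|_\fn$ guaranteed by $N_0=\{e\}$ must be exploited in tandem.
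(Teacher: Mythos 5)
Your first reduction (quotient by the compact subgroup $N_0$, then lift control sets back via Proposition \ref{compactfiber} and Proposition \ref{conjugation}) is sound and coincides with the first step of the paper's argument, and your observation that $N_0=\{e\}$ forces $\DC|_{\fg_0}=0$ (hence $\varphi_t|_{G_0}=\id$ and right $G_0$-equivariance of the system) is correct. The gap is everything after that. Your plan rests on showing that \emph{every} control set with nonempty interior satisfies $e\in\overline{D}$; this is essentially the conclusion $G_0\subset\overline{C}$ of Theorem \ref{maintheorem}, whose proof occupies all of Section 4 (the maps $f_{\varphi}$, Proposition \ref{dense}, Lemma \ref{fiber}, Theorem \ref{controlproducto}), and even there it is established only for one specifically constructed control set, with uniqueness invoked to cover the rest. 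Your sketch for it --- elliptic recurrence combined with the trivial action on $G_0$ --- says nothing about the hyperbolic part of the invertible derivation $\DC|_{\fn}$, where there is no recurrence to exploit; you acknowledge this as ``the genuine technical core'' but do not supply it, so the proof is incomplete at its central point. Moreover, even granting $G_0\subset\overline{D_1}\cap\overline{D_2}$, distinct control sets may share boundary points; only intersecting \emph{interiors} force equality. To conclude you must exhibit an actual trajectory from $\inner D_1$ into $\inner D_2$ and one back, and ``translating controls along $G_0$'' does not produce such a trajectory.

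The paper avoids both difficulties by inducting on $\dim G$ rather than trying to locate $e$ in the closures. After reducing to $\dim N_0=0$ it splits $\fz(\fn)=\fz(\fn)^+\oplus\fz(\fn)^0\oplus\fz(\fn)^-$ according to the real parts of the eigenvalues of $\DC|_{\fz(\fn)}$. If, say, $\fz(\fn)^-\neq\{0\}$, it quotients by $Z(N)^-$, applies the inductive hypothesis downstairs, lifts a trajectory from $\inner C_1$ to a point $yh$ with $y\in\inner C_2$ and $h\in Z(N)^-$, and uses the contraction $\varphi_{nS}(h)\rightarrow e$ along a periodic control through $y$ to land in $\inner C_2$; the symmetric trip back yields a round trip forcing $C_1=C_2$. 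If $\fz(\fn)^+=\fz(\fn)^-=\{0\}$, it uses the isometric flow on $Z(N)$ together with Remark \ref{elliptical} to show that each fiber $xZ(N)$ lies in $\inner C$, and then quotients by $Z(N)$. This contraction/isometry dichotomy on the center of the nilradical is precisely the mechanism your proposal is missing.
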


\begin{proof}
The proof is done by induction on the dimension of $G$. If $G$ is nilpotent, the result is already true by \cite[Theorem 3.4]{DS1}. In particular, the result is true when $\dim G=1$.

Assume the result to holds true on any solvable Lie group of dimension smaller than $n$, and let $\Sigma_G$ be a LCS on a $n$-dimensional solvable Lie group $G$. Since compact subgroups of nilpotent groups live in the center, by our hypothesis $N_0\subset Z(N)$. Therefore, by using that $\fg=\fn+\fg_0$ (see Lemma \ref{conj}), allow us to conclude that $N_0$ is a normal subgroup of $G$.

Let us first assume that $\dim N_0\neq 0$. Since $N_0$ is a $\varphi$-invariant, closed, connected, and normal subgroup of $G$, it induces on $G^1:=G/N_0$ a LCS $\Sigma_{G^1}$. Moreover, since
$$\{X\in\fn; \DC^nX\in\fn_0, \mbox{ for some }n\in\N\}=\fn_0,$$
relation (\ref{nilradicalinduzido}), implies that $\fn^1_0=(d\pi_1)_e(\fn_0)=\{0\}$,
 implying that $N^1_0=\{e\}$ is a compact subgroup. Since $G^1$ is a solvable Lie group with $\dim G^1<n$, the LCS $\Sigma_{G^1}$ admits at most one control set with a nonempty interior.

Now, if $C$ is a control set with a nonempty interior of $\Sigma_G$, by Proposition \ref{conjugation}, there exists a control set $D$ of $\Sigma_{G^1}$ with a nonempty interior. Since $G^1$ is in the hypothesis of the theorem, we conclude that $D$ is unique. On the other hand, the fact that $N_0$ is a compact subgroup implies by Proposition \ref{compactfiber} that  $\pi^{-1}(D)$ is a control set with a nonempty interior of $\Sigma_G$. Since $C\subset\pi^{-1}(D)$, we must have $C=\pi^{-1}(D)$, showing that $C$ has to be the only control set with a nonempty interior of $\Sigma_G$.

Let us now assume that $\dim N_0=0$. In this case, $\DC$ restricted to $\fn$ is invertible, and $\fg=\fg_0\oplus\fn$. Let us consider the decomposition of $\fz(\fn)=\fz(\fn)^+\oplus \fz(\fn)^0\oplus \fz(\fn)^-$, where
$\fz(\fn)^+, \fz(\fn)^0$, and $\fz(\fn)^-$ are the sums of generalized eigenspaces of $\DC|_{\fz(\fn)}$ with positive, zero, and negative real parts, respectively. Since, by standard properties of the eigenspaces of derivations (see \cite[Proposition 3.1]{SM1}), it holds that
$$[\fg^0, \fz(\fn)^+]\subset \fz(\fn)^+, \hspace{.5cm} [\fg^0, \fz(\fn)^0]\subset \fz(\fn)^0\hspace{.5cm}\mbox{ and }\hspace{.5cm}[\fg^0, \fz(\fn)^-]\subset \fz(\fn)^-,$$
 we have that $\fz(\fn)^+, \fz(\fn)^0$, and $\fz(\fn)^-$ are in fact ideals of $\fg$. We have the following possibilities:

\begin{itemize}
\item[1.] $\fz(\fn)^+\neq\{0\}$ or $\fz(\fn)^-\neq\{0\}$:  Let us consider the case $\fz(\fn)^-\neq\{0\}$, since the other possibility is analogous. The connected subgroup $Z(N)^-$ associated with $\fz(\fn)^-$ is a $\varphi$-invariant, closed, connected, and normal subgroup of $G$. Therefore, the LCS $\Sigma_G$ induces a LCS $\Sigma_{G^-}$ on the solvable Lie group $G^-:=G/Z(N)^-$. Since the restriction of $\DC$ to $\fn$ is invertible, we get that
$$X\in\fn\hspace{.5cm}\mbox{ and }\hspace{.5cm}\DC^nX\in\fz(\fn)^-, \mbox{ for some }n\in\N\hspace{.5cm}\iff\hspace{.5cm} X\in \fz(\fn)^-,$$
which by relation (\ref{nilradicalinduzido}) implies that $\fn^-_0=\{0\}$. Since $G^-$ is solvable, $\dim G^-<n$, and $N^-_0=\{e\}$, we conclude that the LCS $\Sigma_{G^-}$ admits at most one control set with a nonempty interior.

 Let $C_1, C_2$ be control sets with nonempty interior of $\Sigma_G$ and denote by $D$ the unique control set of $\Sigma_{G^2}$ containing the projections $\pi(C_1)$ and $\pi(C_2)$. Since $\pi(\inner C_i)\subset\inner D$ for $i=1, 2$, for any $x\in\inner C_1$ and $y\in\inner C_2$, there exists $S_1>0$ and $u_1\in\UC$ such that
$$\phi^2(S_1, \pi(x), u_1)=\pi(y)\hspace{.5cm}\mbox{ or equivalently } \hspace{.5cm}\phi(S_1, x, u_1)=yh, \mbox{ for some }h\in Z(N)^-.$$
On the other hand, the fact that exact controllability holds in $\inner C_2$ implies the existence of $S_2>0$ and $u_2\in\UC$ such that
$$\forall n\in\N, \hspace{.5cm}\phi(nS_2, y, u_2)=y\hspace{.5cm}\implies \hspace{.5cm}\phi(nS_2, \phi(S_1, x, u_1), u_2)=y\varphi_{nS_2}(h).$$
However, the fact that $\DC$ restricts to $\fz(\fn)^-$ has only eigenvalues with negative real parts, implies that
$$\varphi_{nS_2}(h)\rightarrow e, \hspace{.5cm}\mbox{ as }\hspace{.5cm}n\rightarrow+\infty,$$
and hence, $\phi(n_0S_2, \phi(S_1, x, u_1), u_2)\in\inner C_2$ for some $n_0\in\N$. Furthermore, for some $S_3>0$ and $u_3\in\UC$, it holds that
$$\phi(S_3, \phi(n_0S_2, \phi(S_1, x, u_1), u_2), u_3)=y,$$
showing the existence of a trajectory starting in $x\in\inner C_1$ and ending in $y\in\inner C_2$. By interchanging the roles of $x$ and $y$, we obtain a periodic orbit passing through $x$ and $y$, which is only possible when $C_1=C_2$, proving the uniqueness in this case.

\item[2.] $\fz(\fn)^+=\fz(\fn)^-=\{0\}$:
Since the restriction of $\DC$ to $\fn$ is invertible, we get that $\det \DC|_{\fz(\fn)}\neq 0$. Therefore, $\fz(\fn)=\fz(\fn)^0$ implies that $\varphi_S$ restricted to $Z(N)$ is a flow of isometries (see \cite[Theorem 2.4]{DSAyPH}). In particular, for any $h\in Z(N)$, the orbit $\{\varphi_S(h), S\in\R\}$ is bounded.

Let us assume that $\Sigma_G$ admits a control set $C$ with a nonempty interior and consider $x\in\inner C$. Let $S_0>0$ and $u\in\UC$ such that $\phi(nS_0, x, u)=x$ for any $n\in\N$. Then, for any $h\in Z(N)$, it holds that
$$\phi(nS_0, xh, u)=x\varphi_{nS_0}(h)\hspace{.5cm}\implies\hspace{.5cm}\{\phi(S, xh, u), S\geq 0\}\hspace{.5cm}\mbox{ is bounded}.$$
Since we are assuming that $\Sigma_G$ satisfies the LARC, Corollary 4.5.9 of \cite{FCWK} implies the existence of a control set $C_h$ such that
$$\{y\in G; \exists S_k\rightarrow+\infty; \phi(S_k, xh, u)\rightarrow y\}\subset\inner C_h.$$
On the other hand, since $\varphi_S$ restricted to $Z(N)$ is a flow of isometries, the sequence $\{\varphi_{nS_0}(h)\}_{n\in\N}$ admits a subsequence $\{\varphi_{n_kS_0}(h)\}_{k\in\N}$ that converges to $h$ (see Remark \ref{elliptical}), and hence,
implying that $xh\in\inner C_h$. Since $Z(N)$ is path-connected, we get by Lemma \ref{connected} that $C_h=C$ for any $h\in Z(N)$, implying that $xH\subset \inner C$.

Now, the homogeneous space $G^0=G/Z(N)$ is solvable, and its dimension is smaller than $n$. Moreover, the invertibility of the restriction $\DC|_{\fn}$ implies, as in the previous case, that $\fn^0_0=\{0\}$. Therefore, the induced system $\Sigma_{G^0}$ satisfies the inductive hypothesis, and hence, the control set $D$ of $\Sigma_{G^0}$ that contains $\pi(C)$ is unique. Moreover, by the previous calculations, if
$$x\in\inner C\hspace{.5cm}\implies\hspace{.5cm}\pi(x)\in\pi(\inner C)\subset\inner D\hspace{.5cm}\mbox{ and }\hspace{.5cm}\pi^{-1}(\pi(x))=xH\subset\inner C,$$
which by Proposition \ref{conjugation} gives us that $C=\pi^{-1}(D)$, implying the uniqueness of $C$ and concluding the proof.
\end{itemize}
\end{proof}

\section{Existence}

 In order to prove the existence of a control set for LCSs on solvable groups, we first specialize our analysis of the existence to some particular classes of control-affine systems. In addition to their own natural importance, these systems will appear naturally when conjugating a linear control system on a solvable Lie group by the homomorphism constructed on Lemma \ref{conj}.

\subsection{Control sets with a nonempty interior for a particular class of control-affine systems on nilpotent Lie groups}

Let $\fn$ be a nilpotent Lie algebra and identify it with the connected, simply connected Lie group $(\fn, *)$, where
the product is given by the BCH formula.
Such an identification allows us to work with the vectorial structure of $\fn$ and the group product on the same space. In order to make a distinction between the elements in the algebra and the group, we use capital letters $X, Y, Z, \ldots$ for the elements in $\fn$ seen as vectors and small-sized letters $x, y, z, \ldots $, for the elements of $\fn$ seen as group elements.

Under such identification, the group $\mathrm{Aut}(\fn)$ is, at the same time, the group of automorphisms of the algebra and of the group. As a consequence, the set of linear vector fields coincides with $\mathrm{Der}(\fn)$ (see \cite[Theorem 2.2]{Tirao} for the general case). Also, due to the vectorial structure of the subjacent manifold, a right-invariant vector field can be explicitly calculated as
$$Z(x)=(dR_x)_0Z=\frac{d}{ds}\Bigl|_{s=0} tZ*x$$

$$=\frac{d}{ds}\Bigl|_{s=0}\left(sZ+x+\frac{1}{2}[sZ, x]+\frac{1}{12}\left([sZ, [sZ, x]]+[x, [x, sZ]]\right)+\cdots\right)=\sum_{j=0}^{k-1}c_j\ad(x)^jZ,$$
where the coefficients $c_j$ comes from the BCH formula and $k\in\N$ is the degree of nilpotence of $\fn$.

Let us consider the control-affine system on $\fn$ given by
\begin{flalign*}
\label{affineN}
&&\dot{x}=\DC(u)x+\sum_{j=1}^mu_j(Z_j(x)), \hspace{.5cm}u\in\UC &&\hspace{-1cm}\left(\Sigma^A_{(\fn, *)}\right)
\end{flalign*}
where $\DC(u):=\DC_0-\sum_{j=1}^mu_j(t)\DC_j$, with $\DC_0, \DC_1, \ldots,\DC_m\in\mathrm{Der}(\fn)$ and $Z_1, \ldots, Z_m\in\fn$. Consider the associated system given by 
\begin{flalign*}
&&\dot{x}=\DC(u)x=\left(\DC_0-\sum_{j=1}^mu_j\DC_j\right)x,\hspace{.5cm}u\in\UC.  &&\hspace{-1cm}\left(\Sigma_{(\fn, *)}^B\right)
\end{flalign*}
 For any $x\in \fn$ and $u\in\UC$, we denote by 
$$t\mapsto \varphi^B_{S, u}(x):=\varphi^B(S, x, u)\hspace{.5cm} \mbox{ and } \hspace{.5cm}t\mapsto \phi^A_{S, u}(x):=\phi^A(S, x, u),$$ the solutions curves of $\Sigma_{(\fn, *)}^B$ and $\Sigma_{(\fn, *)}^A$, respectively. These curves are defined for all $S\in \R$ and they satisfy a relation similar to the LCS (see \cite[Theorem 4.1]{DS2})
\begin{equation}
\label{solution}
\forall t\in\R, x\in \fn, u\in\UC, \hspace{.5cm}\phi^A_{S, u}\circ R_{x}=R_{\varphi^B_{S, u}(x)}\circ\phi^A_{S, u}.
\end{equation}

The property (\ref{solution}) for the solutions of $\Sigma_{(\fn, *)}^A$, together with Lemma \ref{universalcontrol}, implies the following result.

\begin{theorem}
\label{existence}
If the affine control system $\Sigma_{(\fn, *)}^A$ on $(\fn, *)$ satisfies the LARC and $\det \DC_0\neq 0$, then it admits control sets with a nonempty interior.
\end{theorem}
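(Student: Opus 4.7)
The plan is to exhibit a single point $x_0 \in \fn$ satisfying $x_0 \in \inner \OC^+(x_0)$; by Remark~\ref{interiorControl} this immediately furnishes a control set whose interior contains $x_0$. Since $\Sigma^A_{(\fn,*)}$ is real analytic (the right-invariant fields $Z_j$ are polynomial on $(\fn, *)$ by the formula at the start of this subsection, and $\DC(u)\,x$ is polynomial in $(u, x)$), Lemma~\ref{universalcontrol} supplies a universal control $u_{univ} \in \UC$ with $\phi^A(S, x, u_{univ}) \in \inner \OC^+(x)$ for every $x \in \fn$ and $S > 0$. Using the conjugation identity (\ref{solution}) at $x=0$ one obtains $\phi^A_{T, u}(x) = \phi^A_{T, u}(0) * \varphi^B_{T, u}(x)$, so the fixed-point equation $\phi^A_{T, u}(x_0) = x_0$ rewrites as
\[ f_{\varphi^B_{T, u}}(x_0) = \phi^A_{T, u}(0), \]
which, by Lemma~\ref{lemma}, has a unique solution $x_0$ whenever $\det(\varphi^B_{T, u} - 1) \neq 0$.

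The main difficulty is to realize these two properties simultaneously: the universal property forces $x_0$ to lie in the interior of its own positive orbit, whereas the determinant condition makes the fixed-point equation solvable. I would handle this via a concatenation trick. First pick $T_2 > 0$ small enough that $\det(e^{T_2 \DC_0} - 1) \neq 0$; this is legitimate because $\DC_0$ is invertible and $e^{T_2 \DC_0} - 1 = T_2 \DC_0 + O(T_2^2)$ near $0$. Then pick $T_1 > 0$ and define $u \in \UC$ by $u = u_{univ}$ on $[0, T_1]$, $u = 0$ on $[T_1, T_1 + T_2]$, extended arbitrarily beyond $T := T_1 + T_2$. Composing flows yields $\varphi^B_{T, u} = e^{T_2 \DC_0} \circ \varphi^B_{T_1, u_{univ}}$; since $\varphi^B_{T_1, u_{univ}} \to 1$ as $T_1 \to 0^+$, a standard perturbation bound (an invertible operator plus a small perturbation is still invertible) allows us, for $T_1$ chosen sufficiently small depending on $T_2$, to keep $\det(\varphi^B_{T, u} - 1) \neq 0$.

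It remains to verify that this concatenated $u$ still enjoys the interior-orbit property at time $T$. For any $x \in \fn$, write $y := \phi^A_{T_1, u_{univ}}(x) \in \inner \OC^+(x)$ and pick an open neighborhood $V \subset \OC^+(x)$ of $y$; since $\phi^A_{T_2, 0}$ is a diffeomorphism and sends $\OC^+(x)$ into itself (by concatenation of controls), $\phi^A_{T_2, 0}(V)$ is an open subset of $\OC^+(x)$ containing $\phi^A_{T, u}(x)$, so $\phi^A_{T, u}(x) \in \inner \OC^+(x)$. Finally, setting $x_0 := f_{\varphi^B_{T, u}}^{-1}(\phi^A_{T, u}(0))$ (well-defined by Lemma~\ref{lemma}), identity (\ref{solution}) yields $\phi^A_{T, u}(x_0) = x_0$, and the interior-orbit property just established gives $x_0 = \phi^A_{T, u}(x_0) \in \inner \OC^+(x_0)$. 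Remark~\ref{interiorControl} then delivers the desired control set of $\Sigma^A_{(\fn, *)}$ with non-empty interior.
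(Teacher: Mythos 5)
Your proposal is correct and follows essentially the same route as the paper: obtain a universal control from Lemma~\ref{universalcontrol}, concatenate it with a control (here the zero control, exploiting $\det\DC_0\neq 0$) so that $\det(\varphi^B_{T,u}-1)\neq 0$, use Lemma~\ref{lemma} and identity~(\ref{solution}) to produce a fixed point $x_0=\phi^A_{T,u}(x_0)$, and conclude $x_0\in\inner\OC^+(x_0)$ via Remark~\ref{interiorControl}. The only cosmetic difference is that you make explicit the choice $u\equiv 0$ and the perturbation bound where the paper merely asserts the existence of a suitable $u$ and $S$.
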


\begin{proof}
Since the control-affine system $\Sigma_{(\fn, *)}^A$ is real analytic and satisfies the LARC, Lemma \ref{universalcontrol} assures the existence of a control function $u_0\in\UC$ satisfying property (\ref{inner}), that is,
$$\phi^A(S, x, u_0)\in\inner\OC^{+, A}(x), \hspace{.5cm}\forall x\in\fn, S>0.$$
Now, the assumption that $\det\DC_0\neq 0$ assures the existence of $u\in\UC$ and $S>0$ such that
 $\det(\varphi^B_{S, u}-1)\neq 0$. Since,
$$t\rightarrow 0\hspace{.5cm}\implies\hspace{.5cm}\varphi^B_{t, u_0}\rightarrow 1,$$
there exists $\varepsilon>0$ such that
$$\forall t\in(0, \varepsilon), \hspace{.5cm}\det(\varphi^B_{S+t, u_{\varepsilon}}-1)=\det(\varphi^B_{S, u}\circ\varphi^B_{t, u_0}-1)\neq 0,$$
 where the control function $u_{\varepsilon}$
 \begin{equation}
\label{concatenation}
u_{\varepsilon}(t):=\left\{\begin{array}{ll}
 u_0(t) & t\in (0, \varepsilon) \\
 u(t) & t\in\R\setminus (0, \varepsilon)
\end{array}\right.,
        \end{equation}
is the concatenation of $u_0$ and $u$. By Lemma \ref{lemma}, for any $t\in(0, \varepsilon)$, the map $f_{\varphi^B_{S+t, u_{\varepsilon}}}$ is a diffeomorphism, and hence, there exists $x=x_t\in N$ such that
$$f_{\varphi^B_{S+t, u_{\varepsilon}}}(x)=\phi^A_{S+t, u_{\varepsilon}}(e)\hspace{.5cm}\implies\hspace{.5cm} x*\varphi_{S+t, u_{\varepsilon}}(x)^{-1}=\phi^A_{S+t, u_{\varepsilon}}(e)$$
$$\implies\hspace{.5cm}x=\phi^A_{S+t, u_{\varepsilon}}(e)*\varphi_{S+t, u_{\varepsilon}}(x)=\phi^A_{S+t, u_{\varepsilon}}(x).$$
 Since $\phi^A_{t, u_0}(x)\in\inner\OC^+(x)$ we get that
$$x=\phi^A_{S+t, u_{\varepsilon}}(x)=\phi^A_{S, u}\left(\phi^A_{t, u_0}(x)\right)\in \phi^A_{S, u}(\inner\OC^+(x))\subset\inner\OC^+(x),$$
which implies the existence of a control set $D_x$ with $x\in\inner D_x$ (see Remark \ref{interiorControl}), and proves the result.
\end{proof}
\bigskip

The previous result shows that the control-affine system $\Sigma_{(\fn, *)}^A$ admits a control set with a nonempty interior, as soon as it satisfies the LARC and the derivation $\DC_0$ is invertible. In what follows, we further explore these conditions in order to assure the existence of one such control set whose closure contains the identity element $0\in \fn$.

Assume as in Theorem \ref{existence} that $\det \DC_0\neq 0$ and let $S>0$ such that $\det(1-\rme^{S\DC_0})\neq 0$. Define the set
$$\UC_S:=\{u\in\UC; \;\det(1-\varphi^B_{S, u})\neq 0\}.$$
Since $\varphi^B_{S, \mathbf{0}}=\rme^{S \DC_0}$, the set $\UC_S$ is an open neighborhood of $\mathbf{0}$ in $\UC$, where $\mathbf{0}(t)=0$ for all $t\in\R$. Denote by $\UC_S^0$ the connected component of $\UC_S$ that contains $\mathbf{0}$ and note that, since $\UC_S^0$ is open and connected, it is in fact path connected.

\begin{proposition}
\label{dense}
There exists a control set $D$ of $\Sigma_{(\fn, *)}^A$ with a nonempty interior and such that
$$f_{\varphi_{S, u}^B}^{-1}(\phi^A_{S, u}(0))\in\overline{D}, \hspace{.5cm}\forall u\in\UC_S^0.$$
Moreover, the set
$$\left\{u\in\UC_S^0; f_{\varphi_{S, u}^B}^{-1}(\phi^A_{S, u}(0))\in\inner D\right\},$$
is dense in $\UC_S^0.$
\end{proposition}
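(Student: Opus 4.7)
My plan is to combine a ``tail perturbation'' of the control (adapting the concatenation trick from the proof of Theorem \ref{existence} so as to preserve the time parameter $S$) with the continuity of the fixed-point map $\gamma(u) := f^{-1}_{\varphi^B_{S, u}}(\phi^A_{S, u}(0))$ and path-connectedness of $\UC_S^0$. First, I would establish two preliminaries: continuity of $\gamma$ on $\UC_S^0$ follows from the Corollary after Proposition \ref{continverse} together with standard weak-$*$ continuity of $u \mapsto \varphi^B_{S, u}$ and $u \mapsto \phi^A_{S, u}(0)$; and applying identity (\ref{solution}) to $y = 0$ and $x = \gamma(u)$ yields $\phi^A_{S, u}(\gamma(u)) = \phi^A_{S, u}(0) * \varphi^B_{S, u}(\gamma(u)) = \gamma(u)$, so $\gamma(u)$ is a fixed point of $\phi^A_{S, u}$ for every $u \in \UC_S^0$.

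For the perturbation, given $u \in \UC_S^0$ and small $\varepsilon > 0$, with $u_0$ the universal control of Lemma \ref{universalcontrol}, I define
\[
u_\varepsilon(t) := u(t) \text{ on } (0, S-\varepsilon), \qquad u_\varepsilon(t) := u_0(t-(S-\varepsilon)) \text{ on } (S-\varepsilon, S),
\]
extended arbitrarily outside. Then $\phi^A_{S, u_\varepsilon} = \phi^A_{\varepsilon, u_0} \circ \phi^A_{S-\varepsilon, u}$. Setting $x_\varepsilon := \gamma(u_\varepsilon)$ and $y_\varepsilon := \phi^A_{S-\varepsilon, u}(x_\varepsilon)$, the fixed-point identity gives $x_\varepsilon = \phi^A_{\varepsilon, u_0}(y_\varepsilon) \in \inner \OC^+(y_\varepsilon)$; since $y_\varepsilon \in \OC^+(x_\varepsilon)$, the open set $\inner \OC^+(y_\varepsilon)$ is contained in $\OC^+(x_\varepsilon)$ and hence in $\inner \OC^+(x_\varepsilon)$. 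By Remark \ref{interiorControl}, there is a control set $D_{u_\varepsilon}$ with $x_\varepsilon \in \inner D_{u_\varepsilon}$. Because $\|u_\varepsilon - u\|_{L^1} \to 0$, the path $\varepsilon \mapsto u_\varepsilon$ stays inside $\UC_S^0$ for sufficiently small $\varepsilon > 0$.

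To identify all these control sets with a single $D$, I would fix $u^* \in \UC_S^0$ and set $D := D_{(u^*)_{\varepsilon_0}}$ for some admissible $\varepsilon_0$. For arbitrary $u \in \UC_S^0$, I choose a continuous path $s \mapsto u_s$ from $u^*$ to $u$ in $\UC_S^0$ and a continuous function $\varepsilon(s) > 0$ small enough that $v_s := (u_s)_{\varepsilon(s)}$ remains in $\UC_S^0$ for every $s \in [0,1]$ (possible by compactness of $[0,1]$ and openness of $\UC_S^0$). Then $\gamma(v_s) \in \inner D_{v_s}$ for every $s$; since a point lies in the interior of at most one control set, $s \mapsto D_{v_s}$ is locally constant, and Lemma \ref{connected} forces it to be constant, equal to $D$ after matching at $s = 0$. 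Taking a sequence of such paths with $\varepsilon_n(s) \to 0$, we have $v^n_1 \to u$ and $\gamma(v^n_1) \in \inner D$, so $\gamma(u) \in \overline{D}$ by continuity of $\gamma$ and closedness of $\overline{D}$. The density claim follows from the same tail-perturbation construction applied locally near any $u$.

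The principal obstacle is precisely this globalization step: the tail perturbation only yields an a priori $u$-dependent control set $D_{u_\varepsilon}$, and identifying all of them with a single $D$ requires the interplay of joint continuity in $(u, \varepsilon)$, maximality of control sets (giving local constancy of $w \mapsto D_w$), and Lemma \ref{connected} applied to a suitably chosen path in $\UC_S^0$.
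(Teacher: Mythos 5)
Your proposal is correct and follows essentially the same route as the paper: a concatenation with the universal control $u_0$ of Lemma \ref{universalcontrol} produces fixed points of $\phi^A_{S,\,\cdot}$ lying in the interior of their own positive orbits, and continuity of $u\mapsto f^{-1}_{\varphi^B_{S,u}}(\phi^A_{S,u}(0))$ (via Proposition \ref{continverse}) together with Lemma \ref{connected} applied along paths in the open, path-connected set $\UC_S^0$ identifies all the resulting control sets with a single $D$. The only cosmetic difference is that you append $u_0$ at the tail so as to keep the total time equal to $S$, whereas the paper prepends it at the head and works at time $S+t$ before letting $t\rightarrow 0$.
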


\begin{proof} Let us previously consider $u_0\in\UC$ satisfying property (\ref{inner}) for $\Sigma_{(\fn, *)}^A$. For any $u\in\UC_S$ and $\varepsilon>0$, let us consider the control function $u_{\varepsilon}$ as defined in (\ref{concatenation}). Since $u_{\varepsilon}\rightarrow u$ as $\varepsilon\rightarrow 0$, we can assume w.l.o.g. that $u_{\varepsilon}\in\UC_S^0$. As in the proof of Theorem \ref{existence}, the points $f_{\varphi_{S+t, u_{\varepsilon}}^B}^{-1}(\phi^A_{S+t, u_{\varepsilon}}(0))$ belong to the interior of a control set $D_{u, t}$ for any $t\in(0, \varepsilon)$. Since the map
$$t\in(0, \varepsilon)\mapsto f_{\varphi_{S+t, u_{\varepsilon}}^B}^{-1}(\phi^A_{S+t, u_{\varepsilon}}(0))\in \fn,$$
is continuous, Proposition \ref{continverse} implies that
$D_{t, u}=:D_u$ for all $t\in (0, \varepsilon)$. Moreover,
$$t\rightarrow 0\hspace{.5cm}\implies\hspace{.5cm}f_{\varphi_{S+t, u_{\varepsilon}}^B}^{-1}(\phi^A_{S+t, u_{\varepsilon}}(0))\rightarrow f_{\varphi_{S, u}^B}^{-1}(\phi^A_{S, u}(0))\hspace{.5cm}\implies\hspace{.5cm} f_{\varphi_{S, u}^B}^{-1}(\phi^A_{S, u}(0))\in \overline{D_u}.$$

Let $\gamma:[0, 1]\rightarrow \UC_S^0$ be a continuous path satisfying $\gamma(0)=\mathbf{0}$ and $\gamma(1)=u$. For any $t\in[0, 1]$ there exists $\varepsilon(\tau)>0$ such that $\gamma_{\varepsilon(\tau)}(\tau)\in\UC_S^0$, where $\gamma_{\varepsilon(\tau)}(\tau)$ is also defined by the concatenation with $u_0$ as in (\ref{concatenation}). By continuity, $\gamma_{\varepsilon(\tau)}(\tau-\delta, \tau+\delta)\subset U_S^0$ for some $\delta>0$. Since $[0, 1]$ is compact, there exist $\tau_1, \ldots, \tau_n\in[0, 1]$ and $\delta_1, \ldots, \delta_n\in(0, +\infty)$ such that
$$[0, 1]\subset \bigcup_{i=1}^n(\tau_i-\delta_i, \tau_i+\delta_i)\hspace{.5cm}\mbox{ and }\hspace{.5cm} \gamma_{\varepsilon(\tau_i)}(\tau_i-\delta_i, \tau_i+\delta_i)\subset \UC_S^0.$$
Therefore, for all $\tau\in [0, 1]$ there exists $i\in\{1, \ldots, n\}$ such that
$$\forall t\in (0, \varepsilon(\tau_i)), \hspace{.5cm}\det(1-\varphi_{S, \gamma(\tau)}^B\circ\varphi_{t, u_0}^B)=\det\left(1-\varphi_{S, \gamma_{\varepsilon(\tau_i)}(\tau)}^B\right)\neq 0.$$
In particular, by considering $\varepsilon:=\min\{\varepsilon(\tau_1), \ldots, \varepsilon(\tau_n)\}$, we conclude that $\gamma_{\varepsilon}(\tau)\in\UC_S^0$ for all $\tau\in[0, 1]$. Since,
$$(t, \tau)\in(0, \varepsilon)\times [0, 1]\mapsto f_{\varphi_{S+t, \gamma_{\varepsilon}(\tau)}^B}^{-1}(\phi^A_{S+t, \gamma_{\varepsilon}(\tau)}(0))\in \fn,$$
is a continuous curve and $(0, \varepsilon)\times [0, 1]$ is connected, Lemma \ref{connected} implies that $D_u\equiv D$ showing that
$$f_{\varphi_{S, u}^B}^{-1}(\phi^A_{S, u}(0))\in\overline{D}, \hspace{.5cm}\forall u\in\UC_S^0.$$
On the other hand, for any $u\in\UC_S^0$, the functions $u_\varepsilon$ are, by construction, as close as we want to $u$. Since, 
$$f_{\varphi_{S, u_{\varepsilon}}^B}^{-1}(\phi^A_{S, u_{\varepsilon}}(0))\in\inner D,$$
we conclude that the set
$$\left\{u\in\UC_S^0; f_{\varphi_{S, u}^B}^{-1}(\phi^A_{S, u}(0))\in\inner D\right\},$$
is dense in $\UC_S^0$.
\end{proof}

\subsection{Systems on a Cartesian product}

Let $V$ be a finite-dimensional vector space and $\fn$ a nilpotent Lie algebra. In this section we consider the control-affine system on the Cartesian product $V\times \fn$ given by
\begin{flalign*}
\label{systemcartesian}
&&\, \left\{
\begin{array}{l}
\dot{v}=Av+\sum_{j=1}^mu_jb_j\\
\dot{x}=\DC(u)x+\sum_{j=1}^mu_jZ_j(x),
\end{array}
\right. &&\hspace{-1cm}\left(\Sigma_{V\times  \fn}\right),
\end{flalign*}
where the first equation is an LCS $\Sigma_V$ on the vectorial space $V$ and the second one $\Sigma_{(\fn, *)}^A$ is an affine control system on $(\fn, *)$, as in the previous section. If $\Sigma_{V\times \fn}$ satisfies the LARC, then both systems $\Sigma_V$ and $\Sigma^A_{(\fn, *)}$ also satisfy the LARC. Moreover, if $u_0\in\UC$ satisfies (\ref{inner}) for $\Sigma_{V\times\fn}$, it also satisfies the same property for both systems $\Sigma^A_{(\fn, *)}$ and $\Sigma_V$. By the results of the previous section, $\Sigma_{(\fn, *)}^A$ admits a control set with a nonempty interior $D$ such that $0\in\overline{D}$. Here we show that, if $\pi_2:V\times \fn\rightarrow \fn$ is the projection onto the second coordinate, then $\pi_2^{-1}(D)=V\times D$ is the only control set with a nonempty interior for the system $\Sigma_{V\times \fn}$.

Let us use the notation $\phi=(\phi_1, \phi_2)$ for the solution of $\Sigma_{V\times \fn}$. Since the first component of $\phi$ is the solution of a linear control system on the vectorial space $V$, it satisfies
$$\phi_1(S, v, u)=\rme^{SA}v+\phi_1(S, 0, u).$$
Therefore, for any $v\in\ker A$, we get that 
\begin{equation}
    \label{sum}
    \phi(S, (v, x), u)=(v, 0)+\phi(S, (0, x), u), \hspace{.5cm}\forall u\in\UC, S\in\R.
\end{equation}
 
This property will help us to prove the next lemma.

\begin{lemma}
\label{fiber}
    With the previous notations, if $\Sigma_{V\times\fn}$ satisfies the LARC, $A$ is a nilpotent matrix, and if there exists a nonzero $v\in\ker A$ such that, for any $x_1, x_2\in\inner D$, there exists $S>0$ and $u\in\UC$ such that 
    $$\phi(S, \langle v\rangle\times\{x_1\}, u)=\langle v\rangle\times\{x_2\},$$
    then, there exists $x\in\inner D$ such that
    $$\langle v\rangle\times \{x\}\subset\inner\OC^+(0, x)\cap \inner\OC^-(0, x).$$
\end{lemma}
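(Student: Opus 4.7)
The plan is to fix $x\in\inner D$, use the hypothesis to produce a trajectory of $(0,x)$ that returns to the line $\langle v\rangle\times\{x\}$, and then combine this with the translation-invariance (\ref{sum}) (valid because $v\in\ker A$) and with the LARC-based universal control of Lemma~\ref{universalcontrol} to show that every point on the line lies in the interior of the positive (and, by symmetry, negative) orbit of $(0,x)$. The key device is to package reachability along $\langle v\rangle$ into a sub-semigroup $\Lambda\subset(\R,+)$ and then argue that $\Lambda$ must be all of $\R$.

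\textbf{First step: semigroup of reachable offsets.} Applying the hypothesis with $x_1=x_2=x$ yields $u_*\in\UC$, $S_*>0$, and $\mu_*\in\R$ with $\phi(S_*,(0,x),u_*)=(\mu_* v,x)$. By (\ref{sum}) and the $S_*$-periodic extension $u_*^{\mathrm{per}}$,
$$\phi(nS_*,(\lambda v,x),u_*^{\mathrm{per}}) = ((\lambda+n\mu_*)v,x) \qquad \forall n\in\N,\ \lambda\in\R,$$
so that $n\mu_*\in\Lambda:=\{\lambda\in\R:(\lambda v,x)\in\OC^+(0,x)\}$ for every $n\in\N$. Concatenation combined with (\ref{sum}) shows $\Lambda$ is a sub-semigroup of $(\R,+)$. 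Letting $u_0$ denote the universal control from Lemma~\ref{universalcontrol}, the concatenation $u_\ep$ (applying $u_0$ on $[0,\ep]$ then $u_*^{\mathrm{per}}$) yields $\phi(\ep,(0,x),u_0)\in\inner\OC^+(0,x)$, and since $\phi(nS_*,\cdot,u_*^{\mathrm{per}})$ is a diffeomorphism carrying open subsets of $\OC^+(0,x)$ into open subsets of $\OC^+(0,x)$, points of $\inner\OC^+(0,x)$ accumulate at each $(n\mu_* v,x)$ as $\ep\to 0$. The LARC then supplies an open $\R$-neighborhood of each $n\mu_*$ inside $\Lambda$, so $\Lambda$ is an open sub-semigroup of $\R$.

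\textbf{Filling the line.} To upgrade $\Lambda$ to all of $\R$, I would apply the hypothesis for pairs $(x,y)$ and $(y,x)$ with $y\in\inner D$: concatenating a trajectory from $(0,x)$ to $(\mu_1(y) v,y)$ with one from $(0,y)$ to $(\mu_2(y) v,x)$ produces, via (\ref{sum}), the element $\mu_1(y)+\mu_2(y)\in\Lambda$. As $y$ varies continuously over $\inner D$ and LARC provides full-dimensional controllability of the round trip, the range of $\mu_1(y)+\mu_2(y)$ together with semigroup closure forces $\Lambda$ to contain elements of both signs; an open sub-semigroup of $\R$ that contains both positive and negative elements is necessarily all of $\R$. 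Once $\Lambda=\R$, local accessibility at each $(\lambda v,x)$ upgrades the inclusion $(\lambda v,x)\in\OC^+(0,x)$ to $(\lambda v,x)\in\inner\OC^+(0,x)$.

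\textbf{Negative orbit and main obstacle.} By (\ref{sum}), the condition $(\lambda v,x)\in\OC^-(0,x)$ is equivalent to $\phi(S,(0,x),u)=(-\lambda v,x)$ for some $u,S$, i.e.\ to $-\lambda\in\Lambda$; hence $\Lambda=\R$ immediately gives $\langle v\rangle\times\{x\}\subset\inner\OC^-(0,x)$ as well. The principal obstacle is the covering step: while LARC guarantees that $\Lambda$ is an open sub-semigroup containing $\mu_*\N$, ruling out $\Lambda=(a,\infty)$ or $\Lambda=(-\infty,a)$ demands a delicate use of the hypothesis's uniformity over $\inner D\times\inner D$ together with a continuity argument that produces round-trip offsets of both signs.
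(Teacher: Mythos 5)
Your overall machinery (the semigroup $\Lambda$ of reachable offsets, making it open via the universal control of Lemma \ref{universalcontrol} and relation (\ref{sum}), amplifying intervals $n(a,b)$ to cover a ray, and reducing the negative-orbit inclusion to the positive one) matches the paper's Steps 2 and 4. But the proof has a genuine gap exactly where you flag it: you never actually produce offsets of both signs. Your ``filling the line'' step asserts that, as $y$ ranges over $\inner D$, the round-trip offsets $\mu_1(y)+\mu_2(y)$ together with semigroup closure force $\Lambda$ to meet both half-lines, but nothing in the hypothesis or in the LARC rules out that every round trip based at $x$ has, say, strictly positive net displacement along $v$; an open sub-semigroup of the form $(a,+\infty)$ is perfectly consistent with everything you have established. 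Since the whole lemma hinges on this point, the argument as written does not close.

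The paper obtains both signs by a different mechanism, and at \emph{two different base points} rather than one. Because $\Sigma_V$ is a linear control system satisfying the LARC, the endpoint map $u\mapsto\phi^1(S,0,u)$ is a submersion at $\mathbf{0}$, so its image over the open set $\UC_S^0$ is a neighborhood of $0\in V$ and in particular contains $\lambda_1 v$ and $-\lambda_2 v$ with $\lambda_1,\lambda_2>0$. By the density statement of Proposition \ref{dense}, the controls realizing these displacements can be chosen so that the associated fixed points $x_i=f_{\varphi^B_{S,u_i}}^{-1}(\phi^A_{S,u_i}(0))$ of the second coordinate lie in $\inner D$; since $\phi^A_{S,u_i}(x_i)=x_i$, this yields $(\lambda_1 v,x_1)\in\inner\OC^+(0,x_1)$ and $(-\lambda_2 v,x_2)\in\inner\OC^+(0,x_2)$. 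Only then is the lemma's hypothesis used, namely to travel along the line from $x_2$ to $x_1$ and back, transferring the arbitrarily large positive offsets available at $x_1$ to $x_2$, where arbitrarily large negative offsets already exist; the two rays are then glued at $x=x_2$ as in your Step 4. If you want to salvage your single-base-point approach, you would need to import precisely this submersion-plus-fixed-point input; the line-to-line controllability hypothesis alone does not supply it.
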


\begin{proof} Let us start by noticing that
$$\langle v\rangle\times \{x\}\subset\inner\OC^-(0, x)\hspace{.5cm}\iff\hspace{.5cm} (0, x)\in \inner\OC^+(tv, x), \hspace{.5cm}\forall t\in\R$$
$$\stackrel{(\ref{sum})}{\iff}\hspace{.5cm} (0, x)\in (tv, 0)+\inner\OC^+(0, x)\hspace{.5cm}\forall t\in\R\hspace{.5cm}{\iff}\hspace{.5cm} \langle v\rangle\times \{x\}\subset\inner\OC^+(0, x),$$
and hence, it is enough to show the inclusion for the positive orbit. We divide the rest of the proof into four steps.

{\bf Step 1:} There exist $S>0$, $x_1, x_2\in\inner D$, and $\lambda_1, \lambda_2>0$ such that
$$(\lambda_1 v, x_1)\in\inner\OC^+(0, x_1)\hspace{.5cm}\mbox{ and } (-\lambda_2 v, x_2)\in\inner\OC^+(0, x_2)$$

Since $\Sigma_V$ is a linear control system satisfying the LARC, for any $S>0$, the map
$$u\in\UC\mapsto\phi^1(S, 0, u)\in V,$$
is a submersion around $\mathbf{0}\in \UC$ (see \cite[Theorem 3.5]{Tirao}). Let $S>0$ such that $\det(1-\rme^{S \DC_0})\neq 0$ and consider the open set $\UC_S^0$ defined in the previous section. Since the set
$$\left\{u\in\UC_S^0; f_{\varphi_{S, u}^B}^{-1}(\phi^A_{S, u}(0))\in\inner D\right\},$$
is dense in $\UC_S^0$ and $\phi^1(S, 0, \UC_S^0)$ is an open neighborhood of $0\in V$, there exist $u_1, u_2\in\UC_S^0$ such that
$$\phi^1(S, 0, u_1)=\lambda_1 v, \hspace{.5cm} \phi^1(S, 0, u_2)=-\lambda_2 v, \hspace{.5cm}\mbox{ with }\hspace{.5cm}\lambda_1, \lambda_2>0,
$$
and
$$x_i:=f_{\varphi_{S, u_i}^B}^{-1}(\phi^A_{S, u_i}(0))\in\inner D, \hspace{.5cm}i=1, 2.$$

Moreover, since $\Sigma_{V\times\fn}$ satisfies the LARC, if necessary, we can change $u_i$ by its concatenation with a control function satisfying (\ref{inner}), as done in the proof of Proposition \ref{dense}. Hence,
$$((-1)^{i+1}\lambda_i, x_i)=\phi(S, (0, x_i), u_i)\in\inner\OC^+(0, x_i),$$
concluding the proof of Step 1.

{\bf Step 2:} For $x_1, x_2\in N$ obtained in Step 1, there exists $T_0>0$ such that
$$\forall t> T_0, \hspace{.5cm}(tv, x_1)\in\inner\OC^+(0, x_1)\hspace{.5cm}\mbox{ and }\hspace{.5cm}(-tv, x_2)\in\inner\OC^+(0, x_2)$$

Since $(\lambda_1v, x_1)\in\inner\OC^+(0, x_1)$, there exists $0<a<b$ such that $\left((a, b)v, x_1\right)\subset\inner\OC^+(0, x_1)$. Therefore, for any $\delta_1, \delta_2\in (a, b)$, there exist $u_1, u_2\in\UC$ and $S_1, S_2>0$ such that
$$\phi(S_i, (0, x_1), u_i)=(\delta_i v, x_1)\hspace{.5cm}\implies\hspace{.5cm}\phi(S_2, \phi(S_1, (0, x_1), u_1), u_2)=\phi(S_2, (\delta_1 v, x_1), u_2)$$
$$=(\delta_1 v, 0)+\phi(S_2, (0, x_1), u_2)=(\delta_1 v, 0)+(\delta_1 v, x_1)=\left((\delta_1+\delta_2)v, x_1\right),$$
showing that $((na, nb)v, x_1)\in\inner\OC^+(0, x_1)$ for all $n\in\N$. Now,
$$n_0\geq \frac{b}{b-a}\hspace{.5cm}\iff\hspace{.5cm} (n_0-1)b\geq n_0a\hspace{.5cm}\implies\hspace{.5cm}\bigcup_{n\ge\left\lfloor\frac{b}{b-a}\right\rfloor}(na, nb)=\left(\left\lfloor\frac{b}{b-a}\right\rfloor a, +\infty\right),$$
implying that
$$\forall t> \left\lfloor\frac{b}{b-a}\right\rfloor a, \hspace{.5cm} (t v, x_1)\in\inner\OC^+(0, x_1).$$

In the same way, the fact that $(-\lambda_2, x_2)\in\inner\OC^+(0, x_2)$ implies the existence of $0<d<c$ such that $((-c, -d)v, x_2)\subset\inner\OC^+(0, x_2)$, which, as previously, allows us to conclude that
$$\forall t> \left\lfloor\frac{d}{c-d}\right\rfloor d, \hspace{.5cm} (-t v, x_2)\in\inner\OC^+(0, x_2).$$
Then,
$$T_0=\max\left\{\left\lfloor\frac{b}{b-a}\right\rfloor a, \left\lfloor\frac{d}{c-d}\right\rfloor d\right\},$$
satisfies the desired property, showing Step 2.

{\bf Step 3:} There exists $x\in\inner D$ and $T>0$ such that
$$|t|>T\hspace{.5cm}\implies\hspace{.5cm}(t v, x)\in\inner\OC^+(0, x).$$

Let $S_1, S_2>0$ and $u_1, u_2\in\UC$ such that
$$\phi(S_1, (0, x_2), u_1)=(a_1 v, x_1)\hspace{.5cm}\mbox{ and }\hspace{.5cm}\phi(S_2, (0, x_1), u_2)=(a_2 v, x_2),$$
which exist by our initial hypothesis. Let $a\in\R$, $S>0$, and $u\in\UC$ such that
$$(av, x_2)=\phi(S, (0, x_2), u)\in\inner\OC^+(0, x_2).$$
Now by Step 2, for any $t>T_0$, there exists $S_t>0$ and $u_t\in\UC$ such that $(tv, x_1)=\phi(S_t, (0, x_1), u_t)\in\inner\OC^+(0, x_1)$. Therefore, by concatenation, we get that
$$\inner\OC^+(0, x_2)\ni \phi(S_2, \phi(S_t, \phi(S_1, \phi(S, (0, x_2), u), u_1), u_t), u_2)$$
$$=(av, 0)+\phi(S_2, \phi(S_t, \phi(S_1, (0, x_2), u_1), u_t), u_2)=((a+a_1)v, 0)+\phi(S_2, \phi(S_t, (0, x_1), u_t), u_2)$$
$$((a+a_1+t)v, 0)+\phi(S_2, (0, x_1), u_2)=((a+a_1+a_2+t)v, x_2),$$
showing that
$$\forall t>T_0-a-a_1-a_2\hspace{.5cm} (tv, x_2)\in\inner\OC^+(0, x_2).$$
Therefore, $x=x_2$ and $T:=\max\{T_0, T_0-a-a_1-a_2\}$ satisfy the desired, showing Step 3.

{\bf Step 4:} There exists $x\in\inner D$ such that 
$$\langle v\rangle\times\{x\}\subset\inner\OC^+(0, x).$$

By Step 3, there exists $x\in\inner D$ and $t>0$ such that $(tv, x), (-tv, x)\in\inner\OC^+(0, x)$. By writing 
$$(tv, x)=\phi(S_1, (0, x), u_1)\hspace{.5cm}\mbox{ and }(tv, x)=\phi(S_2, (0, x), u_2),$$
allow us to conclude that
$$\inner\OC^+(0, x)\ni\phi(S_1, \phi(S_2, (0, x), u_2), u_1)=(tv, 0)+\phi(S_2, (0, x), u_1)=(tv, 0)+(-tv, x)=(0, x).$$
In particular, there exists $\delta>0$ such that
$$\forall t\in (-\delta, \delta), \hspace{.5cm}(tv, x)\inner\OC^+(0, x),$$
which, as in Step 2, implies that $(-n\delta, n\delta)v\times\{x\}\subset\inner\OC^+(0, x)$ for all $n\in\N$ and hence,
$$\langle v\rangle\times \{x\}=\bigcup_{n\in\N}(-n\delta, n\delta)v\times\{x\}\subset\inner\OC^+(0, x),$$
concluding the proof. 
\end{proof}

\bigskip
We can now prove the main result of this section.

\begin{theorem}
\label{controlproducto}
If the control-affine system $\Sigma_{V\times \fn}$ satisfies the LARC, $A$ is nilpotent, and $\DC_0$ is invertible, then the set
$$\pi_2^{-1}(D)=V\times D,$$
is a control set with a nonempty interior of $\Sigma_{V\times \fn}$.
\end{theorem}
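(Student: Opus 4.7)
The plan is an induction on $\dim V$. The nonemptiness of the interior is immediate from $V\times\inner D\subset\inner(V\times D)$, so the focus is on showing $V\times D$ is a control set. The base case $V=\{0\}$ reduces to $D$ being a control set of $\Sigma^A_{(\fn,*)}$, which is provided by Proposition \ref{dense}. For the inductive step, fix a nonzero $v\in\ker A$ (possible since $A$ is nilpotent and $V\neq\{0\}$) and let $V'=V/\langle v\rangle$, equipped with the induced nilpotent operator $A'$ and induced control vectors. The canonical projection $\pi':V\times\fn\to V'\times\fn$ conjugates $\Sigma_{V\times\fn}$ to a system on $V'\times\fn$ of the same form, which still satisfies the LARC (as $\pi'$ is a surjective submersion) and keeps the invertible derivation $\DC_0$ on $\fn$. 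By the inductive hypothesis, $V'\times D$ is a control set of this induced system.

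By Proposition \ref{conjugation}(2), it then suffices to produce a control set $C$ of $\Sigma_{V\times\fn}$ and a point of $\inner(V'\times D)$ whose $\pi'$-fiber lies entirely in $\inner C$. I would pick the base point to be $(0,x_0)\in V'\times\inner D$ for some $x_0\in\inner D$ to be determined, so that the relevant fiber is exactly $(\pi')^{-1}(0,x_0)=\langle v\rangle\times\{x_0\}$. This is precisely the conclusion supplied by Lemma \ref{fiber}, so the task reduces to verifying its hypothesis for this $v$: given $x_1,x_2\in\inner D$, one must produce $S>0$ and $u\in\UC$ with $\phi_2(S,x_1,u)=x_2$ and $\phi_1(S,0,u)\in\langle v\rangle$. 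My approach is to start with $u_0,S_0$ steering $x_1$ to $x_2$ in the $\fn$-component (exact controllability in $\inner D$), producing some $V$-endpoint $w_0$, then concatenate with a correcting control built from periodic $\fn$-trajectories at $x_2$: the LARC on $\Sigma_{V\times\fn}$ forces the set of $V$-displacements realizable while returning the $\fn$-component to $x_2$ to be open in $V$, hence to meet the affine line $-w_0+\langle v\rangle$.

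With the hypothesis of Lemma \ref{fiber} verified, the lemma produces $x_0\in\inner D$ and places $\langle v\rangle\times\{x_0\}$ inside $\inner\OC^+(0,x_0)\cap\inner\OC^-(0,x_0)$. By Remark \ref{interiorControl} applied at $(0,x_0)$ together with Lemma \ref{connected} applied to the continuous embedding $\lambda\mapsto(\lambda v,x_0)$ of the connected line $\langle v\rangle$, this line lies in the interior of a single control set $C$. Proposition \ref{conjugation}(2) then yields $C=(\pi')^{-1}(V'\times D)=V\times D$, closing the induction.

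The main obstacle is the verification of the hypothesis of Lemma \ref{fiber}, i.e.\ the simultaneous steering of the $\fn$-component to a prescribed $x_2\in\inner D$ and the $V$-component into the prescribed one-dimensional subspace $\langle v\rangle$. This is precisely the place where the LARC on the \emph{product} system enters essentially, rather than merely on the factors; the remaining structural ingredients (Proposition \ref{conjugation}(2), Lemma \ref{fiber}, Lemma \ref{connected}, Remark \ref{interiorControl}) then plug together in the order above to finish the induction.
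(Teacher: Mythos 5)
Your overall architecture coincides with the paper's: induction on $\dim V$, quotienting by $\langle v\rangle$ for a nonzero $v\in\ker A$, feeding the inductive hypothesis into Lemma \ref{fiber}, and closing with Remark \ref{interiorControl} and Proposition \ref{conjugation}(2). However, there is a genuine gap at the one step you flag as the main obstacle, namely the verification of the hypothesis of Lemma \ref{fiber}. Your argument is: steer $x_1$ to $x_2$ in the $\fn$-component, landing at some $w_0\in V$, and then correct using the fact that ``the set of $V$-displacements realizable while returning the $\fn$-component to $x_2$ is open in $V$, hence meets the affine line $-w_0+\langle v\rangle$.'' Openness of a subset of $V$ does not imply that it meets a prescribed affine line; with $\dim V\geq 2$ a small open set generically misses any given line, and the displacement set has no evident additive semigroup structure that would make it large (concatenation gives $\rme^{S_2A}w_1+w_2$, not $w_1+w_2$, so you cannot accumulate displacements along $\langle v\rangle$ unless they already lie in $\ker A$). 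Even the openness of that fiber-slice of the orbit is not justified by the LARC alone. As written, this step fails.

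The fix is already in your hands and is exactly what the paper does: since, by the inductive hypothesis, $V/\langle v\rangle\times D$ is a control set with nonempty interior of the quotient system, exact controllability holds in its interior; applying it to the points $\pi'(0,x_1)$ and $\pi'(0,x_2)$, both of which lie in $V/\langle v\rangle\times\inner D=\inner(V/\langle v\rangle\times D)$, yields $S>0$ and $u\in\UC$ with $\phi(S,(0,x_1),u)=(av,x_2)$ for some $a\in\R$, and then relation (\ref{sum}) (valid because $v\in\ker A$) upgrades this to $\phi(S,\langle v\rangle\times\{x_1\},u)=\langle v\rangle\times\{x_2\}$. No separate correction argument on the $V$-component is needed. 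With that substitution, the rest of your assembly (Lemma \ref{fiber}, then Proposition \ref{conjugation}(2) applied to $\pi'$ at the point $(0,x_0)$ with fiber $\langle v\rangle\times\{x_0\}\subset\inner C$) is correct and matches the paper.
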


\begin{proof}
Let us prove the result by induction on the dimension of $V$. If $\dim V=0$, then $\Sigma_{V\times\fn}=\Sigma_{(\fn, *)}$ and the result is true by Theorem \ref{existence}. Let us then consider $V$, with $\dim V=n+1$, and assume the result to hold true for any system on the Cartesian $W\times \fn$, where $\dim W\leq n$ and the system on $W$ is a linear control system whose associated drift is nilpotent.

Since $A$ is nilpotent, $\ker A\neq\{0\}$. Let $v\in \ker A$ be a nonzero vector. By invariance, the linear control system $\Sigma_V$ factors to a linear control system $\Sigma_{V/\langle v\rangle}$ on $V/\langle v\rangle$, whose associated derivation is also nilpotent. By the inductive hypothesis, $V/\langle v\rangle\times D$ is a control set with a nonempty interior for the system $\Sigma_{V/\langle v\rangle\times \fn}$. In particular, for any $x_1, x_2$ there exists $S>0$ and $u\in\UC$ such that
$$\phi(S, (0, x_1), u)=(av, x_2)\hspace{.5cm}\stackrel{(\ref{sum})}{\implies} \hspace{.5cm}\phi(S, \langle v\rangle\times \{x_1\}, u)=\langle v\rangle\times \{x_2\}.$$
By Lemma \ref{fiber}, there exists $x\in\inner D$ such that
$$\langle v\rangle\times \{x\}\subset\inner\OC^+(0, x)\cap \inner\OC^-(0, x),$$
and hence, there exists a control set $C$ of $\Sigma_{V\times \fn}$ with $\langle v\rangle\times \{x\}\subset\inner C$. Since the map $\pi:V\times \fn\rightarrow V/\langle v\rangle\times \fn$ is a conjugation between $\Sigma_{V\times \fn}$ and $\Sigma_{V/\langle v\rangle\times \fn}$ satisfying
$$\pi(0, x)\in V/\langle v\rangle\times \inner D=\inner \left(V/\langle v\rangle\times D\right) \hspace{.5cm}\mbox{ and }\hspace{.5cm}\pi^{-1}(\pi(0, x))=\langle v\rangle\times \{x\}\subset \inner C,$$
we get by Proposition \ref{conjugation} that $C=V\times D$, proving the result.
\end{proof}


\subsection{The control set of a linear control system}

In this section we prove our main result, giving conditions for the existence of control sets with a nonempty interior. Namely, we prove the following:

\begin{theorem}
\label{maintheorem}
Let $\Sigma_G$ be a linear control system on a solvable Lie group $G$, and denote by $N$ the nilradical of
$G$. If $\Sigma_G$ satisfies the LARC and $N_0=N\cap G_0$ is a compact subgroup, then $\Sigma_G$ admits a (unique) control set $C$ with a nonempty interior. Moreover, it holds that $G_0\subset \overline{C}$.
\end{theorem}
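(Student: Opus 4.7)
The plan is to reduce the problem in two stages by quotienting out compact $\varphi$-invariant subgroups, landing on the Cartesian system already covered by Theorem~\ref{controlproducto}. Uniqueness is given by Theorem~\ref{uniqueness}, so only existence and the inclusion $G_0 \subset \overline{C}$ remain.

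The first quotient will kill $N_0$. Being a compact subgroup of the nilpotent group $N$, it lies in $Z(N)$; together with $\fg = \fn + \fg_0$ (Lemma~\ref{conj}(1)) and $N_0 \subset G_0$, this forces $N_0$ to be normal in $G$. It is also $\varphi$-invariant and compact, so Proposition~\ref{compactfiber} reduces the theorem to the induced LCS on $\widehat{G} := G/N_0$. Relation~(\ref{nilradicalinduzido}) yields $\widehat{\fn}_0 = \{0\}$, hence $\widehat{N}_0 = \{e\}$, and the analogous argument applied to $\DC^{\HC,\EC}$ gives $\widehat{G}_0 = \pi(G_0)$. Thus one may assume $N_0 = \{e\}$ from the outset, in which case Lemma~\ref{conj}(3) furnishes an isomorphism $\psi\colon G_0 \times_{\rho} \fn \to G$ conjugating $\XC$ with the product vector field $\XC|_{G_0} \times \DC|_{\fn}$, while Lemma~\ref{conj}(2) makes $G_0$ abelian, hence $G_0 \cong \R^a \times \T^b$.

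The second quotient will kill the torus factor. Let $K \subset G_0$ be the maximal compact subgroup, which is unique because $G_0$ is connected and abelian and is therefore preserved by every Lie automorphism; in particular $K$ is $\varphi$-invariant, and via $\psi$ it embeds as a compact $\varphi$-invariant subgroup of $G$. A second use of Proposition~\ref{compactfiber} reduces matters to the induced control-affine system on the homogeneous space $(G_0 \times_{\rho} \fn)/K$, which is diffeomorphic to $V \times \fn$ for $V := G_0/K \cong \R^a$. Writing out the right-invariant vector fields on $G_0 \times_{\rho} \fn$ at a general point and projecting shows the descended system is exactly of the form $\Sigma_{V \times \fn}$ introduced earlier, with $A = \DC|_V$ nilpotent (since $\DC|_{\fg_0} = \DC_{\NC}|_{\fg_0}$) and $\DC_0 = \DC|_{\fn}$ invertible (since $\fn_0 = \{0\}$); the LARC is preserved by the submersion. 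Theorem~\ref{controlproducto} then yields a control set $V \times D$ with nonempty interior, and Proposition~\ref{dense} applied at $u \equiv \mathbf{0}$ places $0 \in \overline{D}$, so $V \times \{0\} \subset \overline{V \times D}$.

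To conclude, pulling $V \times D$ back through both quotient maps (using Proposition~\ref{compactfiber} at each step) produces the sought control set $C$ in $\Sigma_G$; because each quotient is an open surjection, preimages commute with closure, and the image of $G_0$ tracks through the two projections as $G_0 \to \widehat{G}_0 \to V \times \{0\}$, so $V \times \{0\} \subset \overline{V \times D}$ lifts to $G_0 \subset \overline{C}$. The main technical obstacle is verifying in the second step that the right-invariant control vector fields genuinely descend into the precise affine form $(b_j,\, -\DC_j X + Z_j(X))$ required by $\Sigma_{V \times \fn}$; once one differentiates the semidirect product multiplication $(g_1, X_1)(g_2, X_2) = (g_1 g_2,\, X_1 * \rho(g_1) X_2)$ this drops out, and everything else is bookkeeping with Proposition~\ref{conjugation}.
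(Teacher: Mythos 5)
Your proposal is correct and follows essentially the same route as the paper: quotient by the compact normal subgroup $N_0$ to reduce to $N_0=\{e\}$, pass to $G_0\times_{\rho}\fn$ via the isomorphism of Lemma \ref{conj}, quotient by the maximal compact subgroup of the abelian group $G_0$ to land on $\Sigma_{V\times\fn}$ with $A$ nilpotent and $\DC_0$ invertible, and then invoke Theorem \ref{controlproducto} together with Proposition \ref{compactfiber} to pull the control set $V\times D$ back and track $G_0$ into $\overline{C}$. Your explicit justification that $A$ is nilpotent (because $\DC|_{\fg_0}=\DC_{\NC}|_{\fg_0}$) and that $0\in\overline{D}$ via Proposition \ref{dense} at $u\equiv\mathbf{0}$ fills in details the paper leaves implicit.
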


\begin{proof}
The idea is basically to show that the existence of one such control set is equivalent to the existence of a control for a system on an appropriated Cartesian product as the one given in Section 3.2. The proof is done by a series of reductions as follows:

\begin{itemize}
\item {\bf Reduction to the case $N_0=\{e\}$:} As shown in the proof of Theorem \ref{uniqueness}, if $N_0$ is a compact subgroup, it is a normal subgroup of $G$. Since it is also $\varphi$-invariant, the LCS $\Sigma_G$ induces a LCS $\Sigma_{\widehat{G}}$ on $\widehat{G}=G/N_0$, satisfying $\widehat{N}_0=\{e\}$. Moreover, if $\Sigma_{\widehat{G}}$ admits a unique control set $D$ with a nonempty interior and satisfying $0\in\overline{D}$, then $C=\pi^{-1}(D)$ is, by Proposition \ref{compactfiber}, a control with a nonempty interior of $\Sigma_G$ satisfying $N_0=\pi^{-1}(e)\subset\pi^{-1}(\overline{D})=\overline{C}$. Moreover, $C$ is unique, since $D$ is unique.
\bigskip

\item {\bf Reduction to the case $G_0\times_{\rho}\fn$:} If $N_0=\{e\}$, the map $\psi:G_0\times_{\rho}\fn\rightarrow G$ defined in Lemma \ref{conj} is an isomorphism. In particular, it induces a LCS $\Sigma_{G_0\times_{\rho}\fn}$, which is equivalent to $\Sigma_G$. Therefore, we only have to show the existence and uniqueness of control sets with a nonempty interior for the LCS $\Sigma_{G_0\times_{\rho}\fn}$. In order to analyze this control system, let us obtain explicitly the expression of its coordinates.

First, by Lemma \ref{conj}, the vector field induced by $\XC$ on $G_0\times_{\rho}\fn$ has the form $\XC|_{G_0}\times\DC|_{\fn}$ with $\det\DC|_{\fn}\neq 0$. Moreover $\det\DC|_{\fn}\neq 0$ since we are assuming that $N_0=\{e\}$.

Therefore, in order to obtain $\Sigma_{G_0\times_{\rho}\fn}$ in coordinates, we only have to see the expression of a right-invariant vector field on $G_0\times_{\rho}\fn$. Since $T_{(e, 0)}(G_0\times_{\rho}\fn)=\fg_0\times\fn$,
any right-invariant vector field on $G_0\times_{\rho}\fn$ is given by $(Y, Z)^R(g, x)=(dR_{(g, x)})_{(e, 0)}\alpha'(0)$, where $\alpha:(-\varepsilon, \varepsilon)\rightarrow G_0\times_{\rho}\fn$ is any differentiable curve satisfying $\alpha(0)=(e, 0)$ and $\alpha'(0)=(Y, Z)$ with $(Y, Z)\in\fg_0\times\fn$. By considering the curve $\alpha(s):=(\rme^{sY}, sZ)$ we get that
$$R_{(g, x)}(\alpha(s)))=(\rme^{sY}, sZ)(g, x)=\left(\rme^{sY}g, sZ*\rho(\rme^{sY})x)\right).$$
Therefore, by the BCH formula,
$$sZ*\rho(\rme^{sY})x=sZ+\rho(\rme^{sY})x+\frac{1}{2}[sZ, \rho(\rme^{sY})x]+\frac{1}{12}\left([sZ, [sZ, \rho(\rme^{sY})x]]+[\rho(\rme^{sY})x, [\rho(\rme^{sY})x, sZ]]\right)+\cdots,$$
and by differentiation at $s=0$, we obtain that
$$
(Y, Z)(h, x)=\left(Y(h), Z(x)+(d\rho)_eYx\right).
$$
As a consequence, the LCS on $\Sigma_{G_0\times_{\rho}\fn}$ has the form
\begin{flalign*}
&&\, \left\{
\begin{array}{l}
\dot{g}=\XC(g)+\sum_{j=1}^mu_jY_j(g)\\
\dot{x}=\DC(u)x+\sum_{j=1}^mu_jZ_j(x),
\end{array}
\right. &&\hspace{-1cm}\left(\Sigma_{G_0\times_{\rho} \fn}\right),
\end{flalign*}
where $\DC(u)=\DC_0+\sum_{j=1}^m\DC_j$ for $\DC_0=\DC|_{\fn}$ and $\DC_j=(d\rho)_eY_j\in\mathrm{Der}(\fn)$.

\bigskip

\item {\bf Reduction to the case $V\times \fn$:} Assume $N_0=\{e\}$. By Lemma \ref{conj}, the subgroup $G_0$ is abelian. If $T$ is the maximal compact subgroup of $G_0$, then $V=G_0/T$ is simply connected and hence, up to isomorphisms, a vectorial space. Let us consider the subgroup $T\times_{\rho}\{0\}$ and note that
$$(g_1, X_1)(t, 0)=(g_2, X_2)\hspace{.5cm}\iff\hspace{.5cm}g_1t=g_2\hspace{.5cm}\mbox{ and }\hspace{.5cm}X_1=X_2,$$
showing that 
$$(G_0\times_{\rho}\fn)/(T\times_{\rho}\{0\})=G_0/T\times \fn=V\times \fn.$$
Since $T$ is invariant by automorphisms, the subgroup $T\times_{\rho}\{0\}$ is invariant by automorphisms in $\mathrm{Aut}(G_0)\times\mathrm{Aut}(\fn)$. Therefore, the LCS $\Sigma_{G_0\times_{\rho}\fn}$ obtained in the previous item induces a control-affine system on $V\times \fn$ through the canonical projection
$$\pi:G_0\times_{\rho}\fn\rightarrow V\times \fn, \hspace{.5cm}(g, x)\mapsto (gT, X).$$
Since the projection onto the first factor is a homomorphism, the LCS
$$\dot{g}=\XC(g)+\sum_{j=1}^mu_jY_j(g),$$
on $G_0$ projects to a LCS on $V$. However, any LCS on a vectorial space $V$ has the form
$$\dot{v}=Av+\sum_{j=1}^mu_jb_j,$$
for some $b_1, \ldots, b_m\in V$, and hence, we conclude that $\pi$ conjugates the LCS $\Sigma_{G_0\times_{\rho}\fn}$ to the control-affine system on $V\times\fn$ given by
\begin{flalign*}
&&\, \left\{
\begin{array}{l}
\dot{v}=Av+\sum_{j=1}^mu_jb_j\\
\dot{x}=\DC(u)x+\sum_{j=1}^mu_jZ_j(x)
\end{array}
\right. &&\hspace{-1cm}\left(\Sigma_{V\times \fn}\right).
\end{flalign*}
By construction, the system $\Sigma_{V\times\fn}$ satisfies the LARC, $A$ is a nilpotent matrix, and $\det\DC_0\neq 0$ . Therefore, by Theorem \ref{controlproducto}, $V\times D$ is a control set with a nonempty interior of $\Sigma_{V\times \fn}$, where $D\subset \fn$ is the control system $\Sigma_{(\fn, *)}^A$ obtained by the projection of $\Sigma_{V\times \fn}$ onto its second coordinate. Since $T\times_{\rho}\{0\}$ is a compact subgroup, Proposition \ref{compactfiber} implies that $C=\pi^{-1}(V\times D)$ is a control set with a nonempty interior of $\Sigma_{G_0\times_{\rho}\fn}$. Moreover, 
$$0\in \overline{D}\hspace{.5cm}\implies\hspace{.5cm} V\times\{0\}\subset V\times \overline{D}\hspace{.5cm}\implies\hspace{.5cm}G_0\times\{0\}=\pi^{-1}(V\times\{0\})\subset\overline{C},$$
concluding the proof.
\end{itemize}

\end{proof}

\end{document}